\documentclass{amsart}[12pt]
\usepackage{amscd,amsmath,amsfonts,amssymb,enumerate,amsthm}
\usepackage{graphicx}
\usepackage{wasysym}
\usepackage{ifpdf}
\usepackage{enumitem}
\usepackage[hyperfootnotes=false]{hyperref}
\hypersetup{hidelinks}
\usepackage{setspace}
\usepackage[usenames]{xcolor}
\usepackage{amsrefs}
\usepackage[utf8]{inputenc}
\usepackage[english]{babel}
\usepackage{fullpage}

\newtheorem{thm*}{Theorem}[]
\newtheorem{prop}{Proposition}[section]
\newtheorem{thm}[prop]{Theorem}
\newtheorem{cor}[prop]{Corollary}

\newtheorem{rem}[prop]{Remark}

\newtheorem{lem}[prop]{Lemma}
\newcommand{\mref}[1]{(\ref{#1})}

\title[Behavior of the complete Kähler-Einstein metric in tube domains]{Asymptotic behavior of the complete Kähler-Einstein metric in some convex domains}
\author{Sebastien Gontard}

\email{Sebastien.Gontard@univ-grenoble-alpes.fr}

\address{Univ. Grenoble Alpes, CNRS, IF, 38000 Grenoble, France}

\keywords{Kähler-Einstein Metrics, Holomorphic Sectional Curvatures, Tube domains}

\subjclass[2010]{32F45, 32Q20, 32T25, 53C55}

\begin{document}
\maketitle
\begin{abstract}
We study the complete Kähler-Einstein metric in tube domains $\{z\in \mathbb{C}^2 / \; Re(4pz_1) + Re(z_2)^{2p} <1 \}$ where $p\in \mathbb{N}^\ast$. We obtain estimates of this metric and its holomorphic bisectional curvatures near the weakly pseudoconvex boundary points and use these estimates to obtain non tangential estimates for the holomorphic bisectional curvatures of the Kähler-Einstein metric in some convex domains.
\end{abstract}

\subsection*{Introduction}
In 1980, S.-Y. Cheng and S.-T. Yau proved that every bounded strictly pseudoconvex  domain $\Omega \subset \mathbb{C}^n$, $n\geq 2$, with boundary of class $\mathcal{C}^7$, admits a complete Kähler-Einstein metric of negative Ricci curvature. Namely, they proved that there exists a (unique) strictly plurisubharmonic solution $g\in \mathcal{C}^{\omega}\left(\Omega\right)$ to the problem:
\begin{eqnarray}
\label{MAC}
Det\left[g_{i\bar{j}}\right]&=e^{(n+1)g} \quad \text{on $\Omega$},
 \\
\label{BVal}
g&=+\infty \quad \text{on $\partial \Omega$}
\end{eqnarray}
(see \cite{CY}). By comparing this solution to the approximate solutions constructed by C. Fefferman in \cite{Fef2}, they proved that if $\Omega$ is bounded, strictly pseudoconvex with boundary of class $\mathcal{C}^{\max\left(2n+9,3n+6\right)}$, then $e^{-g}\in \mathcal{C}^{n+1+\delta}\left(\overline{\Omega}\right)$ for every $\displaystyle \delta \in \left[0,\frac{1}{2}\right[$, and the holomorphic sectional curvatures of this metric tend to -2 at any boundary point of $\Omega$, which are the curvatures of the unit ball equipped with its Bergman-Einstein metric.
\\The existence of a complete Kähler-Einstein metric of negative Ricci curvature has been extended to the case of bounded domains of holomorphy by N.Mok and S.-T.Yau in \cite{MY}. Moreover, J.Bland proved that if $q\in \partial \Omega$ is a "nice" strictly pseudoconvex boundary point of $\partial \Omega$, then there exists a neighborhood $U$ of $q$ such that $e^{-g}\in \mathcal{C}^{n+1+\delta}\left(\overline{\Omega \cap U}\right)$ (see \cite{Bla1}). We proved that this result still holds without the "nice" condition, and also proved that the holomorphic sectional curvatures of the metric tend to -2 at $q$ (see \cite{Gon1}).
\\Although the regularity of $e^{-g}$ and the behavior of the holomorphic bisectional curvatures at weakly pseudoconvex boundary points is unknown in general, J.Bland proved that in the Thullen domains $\{(z,w)\in \mathbb{C}^{n-1}\times \mathbb{C} / \lvert z \rvert ^2 + \lvert w \rvert ^{2p}<1\}$, the sectional curvatures are bounded from above and below by negative constants if $p\geq 1$. He also proved some estimates for the metric and its volume form (see \cite{Bla2}).
\\In this paper we adapt the method used by J.Bland in \cite{Bla2} to prove curvatures estimates in the tube domains $T_p:=\{z\in \mathbb{C}^2 / \; Re(4pz_1) + Re(z_2)^{2p} <1 \}$ where $p\in \mathbb{N}^\ast$. More precisely, we prove:

\begin{thm*}
\label{Main}
There exist positive constants $0<c\leq C$ and $0<\alpha <1$ such that we have the following for every $z\in T_p\cap \left(\left\lbrace \frac{Re(z_2)^{2p}}{1-Re(4pz_1)} \leq \alpha \right\rbrace \cup \left\lbrace   1-\alpha \leq \frac{Re(z_2)^{2p}}{1-Re(4pz_1)}<1 \right\rbrace \right)$:
\[\forall v,w\in \mathbb{C}^2\setminus\{0\},\quad -C\leq Bis_z(v,w) \leq -c ,\]
where $Bis_{z}(v,w)$ stands for the holomorphic bisectional curvature of the complete Kähler-Einstein metric induced by the potential $g$ satisfying conditions \mref{MAC} and \mref{BVal} on $T_p$, at point $z$ and at vectors $v$ and $w$.
\end{thm*}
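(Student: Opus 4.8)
\subsection*{Proof proposal}

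The strategy is to use the automorphisms of $T_p$ to collapse the Monge--Ampère equation to a scalar ODE, and then to read the bisectional curvature off from that ODE near its two singular ends. Besides the imaginary translations $z\mapsto z+i\mathbb{R}^2$ and the involution $z_2\mapsto-z_2$, the domain $T_p$ carries the anisotropic dilations $\Phi_\lambda(z_1,z_2)=\big(\lambda^{2p}z_1+\tfrac{1-\lambda^{2p}}{4p},\,\lambda z_2\big)$ for $\lambda>0$; a one--line check on the defining function shows each $\Phi_\lambda$ is an automorphism, and the group they generate together with the translations is transitive on every level set of $t:=\operatorname{Re}(z_2)^{2p}/s$, where $s:=1-\operatorname{Re}(4pz_1)$. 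Since $g$ is a biholomorphic invariant and $|\det D\Phi_\lambda|^2=\lambda^{2(2p+1)}$, uniqueness in \mref{MAC}--\mref{BVal} gives $g\circ\Phi_\lambda=g-\tfrac{2(2p+1)}{3}\log\lambda$; combined with the other invariances this forces
\[
g(z)=-\tfrac{2p+1}{3p}\log s+h(\tau),\qquad \tau:=\operatorname{Re}(z_2)\,s^{-1/2p},
\]
for an even function $h\in\mathcal{C}^\omega(-1,1)$ with $h(\tau)\to+\infty$ as $|\tau|\to1$. For a tube--domain potential, \mref{MAC} is equivalent to the real equation $\tfrac1{16}\det D^2G=e^{3G}$ on the convex base $\{x_2^{2p}<1-4px_1\}$ with $G(\operatorname{Re}z)=g(z)$; inserting the ansatz, the powers of $s$ cancel and one is left with the single ODE
\[
\tfrac{4p(2p+1)}{3}\,h''+(2p-1)\,\tau h'h''-(h')^2=4e^{3h}.
\]
Moreover, because the bisectional curvature is an automorphism invariant and the group is transitive on $\{t=\tau^{2p}\}$, both $\sup_{v,w}Bis_z$ and $\inf_{v,w}Bis_z$ depend on $z$ only through $\tau$, so it suffices to estimate them along the slice $z=(0,\tau)$ for $|\tau|\le\alpha^{1/2p}$ and for $(1-\alpha)^{1/2p}\le|\tau|<1$.

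Next I would extract the boundary behaviour of $h$ from the ODE. Near $\tau=0$ the ODE is regular and $h$ is even analytic, with $h'(0)=0$, $h''(0)=\tfrac{3}{p(2p+1)}e^{3h(0)}>0$, and $h',h'',h''',h''''$ all finite; the constant $h(0)$ is the unique one for which the solution blows up at $\tau=\pm1$ (existence and uniqueness being supplied by Cheng--Yau applied to a bounded biholomorphic model of $T_p$, which exists since $T_p$ is convex and contains no complex line). Near $\tau=1$, set $\epsilon=1-\tau$ and $h=-\log\epsilon+\psi(\epsilon)$; feeding this into the ODE and matching powers of $\epsilon$ gives $e^{3\psi(0)}=\tfrac{2p-1}{4}$, $\psi'(0)=\tfrac{p(4p-1)}{6(2p-1)}$, and all further coefficients recursively (a possible $\epsilon^{3}\log\epsilon$ term --- the two--dimensional Fefferman phenomenon --- is harmless, being of lower order in every derivative we use). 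Consequently $h^{(k)}(\tau)=(k-1)!\,(1-\tau)^{-k}\big(1+o(1)\big)$ as $\tau\to1$ for $k=1,2,3,4$, which is all that is needed.

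Then I would compute the curvature. Writing $g_{i\bar j}=\tfrac14 G_{ij}$ one finds $R_{i\bar jk\bar l}=\tfrac1{16}\big(G^{pq}G_{ikp}G_{jlq}-G_{ijkl}\big)$, where $G^{pq}$ is the inverse Hessian and indices refer to $\operatorname{Re}z$; hence
\[
16\,Bis_z(v,w)\,(g_{a\bar b}v^a\bar v^b)(g_{c\bar d}w^c\bar w^d)=G^{pq}\big(G_{ikp}v^iw^k\big)\overline{\big(G_{jlq}v^jw^l\big)}-G_{ijkl}v^i\bar v^jw^k\bar w^l,
\]
and the task becomes showing the right--hand side is comparable, from both sides, to $-(g_{a\bar b}v^a\bar v^b)(g_{c\bar d}w^c\bar w^d)$. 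All the $G_{ij\cdots}$ are explicit in $\tau$ and $h',h'',h''',h''''$, with $\det D^2G=16e^{3G}\neq0$ controlling the denominator. On the slice with $|\tau|$ small the Hessian is essentially diagonal and the derivatives odd in $x_2$ drop out; one evaluates the right--hand side at $\tau=0$, checks it is $<0$ for every nonzero $(v,w)$, and then joint continuity of $(\tau,[v],[w])\mapsto Bis_z$ over the compact set $[0,\alpha^{1/2p}]\times\mathbb{P}^1\times\mathbb{P}^1$ yields the two--sided negative bounds once $\alpha$ is small. On the slice $\tau\to1$ one substitutes the $\epsilon$--expansion: the Hessian degenerates at leading order into a rank--one form, reflecting the sharp anisotropy of a complete metric near a strictly pseudoconvex point ($1/\epsilon^2$ normally, $1/\epsilon$ tangentially), so the estimate hinges on the exact cancellations in the contractions of $G_{ijk}$ and $G_{ijkl}$ against normal versus tangential directions; carried through, this shows $Bis_z(v,w)\to-\big(1+|\langle v,w\rangle|^2/(\|v\|^2\|w\|^2)\big)\in[-2,-1]$ uniformly in the directions, hence the two--sided negative bounds for $\alpha$ small.

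The main obstacle is this last step: near $\tau=1$ the numerator and denominator above both blow up and the sign depends on delicate cancellations among terms of the same size, so one must push the $\epsilon$--expansions of $h,h',h'',h''',h''''$ far enough and organise the $2\times2$ contractions with care --- this is precisely where Bland's analysis of the Thullen domains is adapted. By comparison the reduction to the ODE is bookkeeping, and the $\tau\to0$ regime reduces to one honest computation (strict negativity at $\tau=0$) plus a compactness argument; the middle range $\alpha^{1/2p}<|\tau|<(1-\alpha)^{1/2p}$, where negativity of $Bis_z$ is not established, is precisely why it is excluded from the statement.
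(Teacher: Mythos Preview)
Your proposal is correct and follows essentially the same route as the paper: reduce via the automorphism group to a one-variable potential $h(\tau)$ (the paper's $F(X)$), derive the same second-order ODE (the paper's relation $Z=e^{3F}$), analyze the two ends $\tau\to0$ and $\tau\to1$, and conclude by computing the curvature at the origin plus continuity for the first region and by the asymptotics $h^{(k)}\sim(k-1)!(1-\tau)^{-k}$ for the second, obtaining $Bis\to -1-\lvert\langle v,w\rangle_g\rvert^2/(\lVert v\rVert_g^2\lVert w\rVert_g^2)$ exactly as in the paper's Theorem~\ref{BisHTG}. The only noteworthy differences are cosmetic: you work with the real Hessian $G$ of the tube potential (which makes the curvature formula slightly cleaner), and you obtain the $\tau\to1$ asymptotics by formal power matching in $\epsilon=1-\tau$, whereas the paper derives them rigorously by an integration trick (its Proposition~\ref{Eqf} and Lemma~\ref{LemCvx}) --- your expansion is correct, but to make it a proof you would still need to justify that the actual solution matches the formal series to the required order, which the paper gets either from this direct ODE analysis or from the $\mathcal{C}^{3+\delta}$ boundary regularity at the strictly pseudoconvex point $(0,1)$.
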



The regions $\left\lbrace \frac{Re(z_2)^{2p}}{1-Re(4pz_1)} \leq \alpha \right\rbrace$ and $\left\lbrace   1-\alpha \leq \frac{Re(z_2)^{2p}}{1-Re(4pz_1)}<1 \right\rbrace $ are related to the geometry of the orbits of $T_p$ under the action of its automorphism group, and replace the usual conical regions (see Section 2 for details).

The Bergman metric is another interesting metric induced by a Kähler potential. S.Fu gave estimates of the Bergman metric and its holomorphic sectional curvatures on the axis $\mathbb{R}\times \{0\} + i\mathbb{R}^2$ in tube domains $T_p$. He deduced that for every smoothly bounded complete Reinhardt domain of finite type there exists a neighborhood of the boundary on which holomorphic sectional curvatures for the Bergman metric are pinched between negative constants (see \cite{Fu}). Following the method used in the present paper, the computations from \cite{Fu}, the asymptotic expansion of the Bergman kernel in tube domains obtained by J.Kamimoto in \cite{Kam1}, a remark made by K.-T. Kim and S. Lee about the Bergman-Fuchs formula (see Section 6 in \cite{KL1}), we notice that the conclusion of Theorem \ref{Main} holds for the Bergman metric. However, the behavior of the holomorphic sectional curvatures of the Bergman metric away from the axis $\mathbb{R}\times \{0\} + i\mathbb{R}^2$ is still unknown.

Using a rescaling technique, results of J.Bland in Thullen domains (see \cite{Bla2}) and the results of the present paper, we prove:

\begin{thm*}
\label{Conv}
Let $D\subset \mathbb{C}^2$ be a bounded convex domain with boundary of class $\mathcal{C}^\infty$. Let $q\in \partial D$ be a point of finite type $2p$ such that a local model at $q$ is either a Thullen domain or a tube domain. Let $\left(z^{(\nu)}\right)_{\nu \in \mathbb{N}}\in D^\mathbb{N}$ be a sequence that converges non-tangentially to $q$. Then, there exists positive constants $0<c\leq C$ and an integer $\nu_0\in \mathbb{N}$ such that:
\[-C\leq \sup_{\stackrel{v,w \in \mathbb{C}^2 \setminus\{0\}}{\nu \geq \nu_0}}Bis_{z^{(\nu)}}\left(v,w\right) \leq -c.\]
\end{thm*}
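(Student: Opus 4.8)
The plan is to use an anisotropic rescaling centred at the sequence, the biholomorphic invariance of the complete Kähler--Einstein metric and its holomorphic bisectional curvature, and a stability (continuity) statement for that metric, in order to reduce Theorem~\ref{Conv} to the two model cases that are already available: Bland's estimates in Thullen domains (see \cite{Bla2}) and Theorem~\ref{Main} in the tube domains $T_p$.

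\textbf{Normalization and scaling.} Since $\partial D$ is smooth and $q$ is of finite type $2p$, after a polynomial automorphism of $\mathbb{C}^2$ with constant Jacobian determinant and a translation sending $q$ to $0$, $D$ is defined near $q$ by $Re\,\zeta_1+P(\zeta_2)+E(\zeta)<0$, where $P$ is the model polynomial of degree $2p$ and $E$ collects the higher--order terms; by hypothesis $P(\zeta_2)=c\lvert\zeta_2\rvert^{2p}$ (Thullen case) or $P(\zeta_2)=c\,Re(\zeta_2)^{2p}$ (tube case) for some $c>0$. Set $\delta_\nu:=\mathrm{dist}(z^{(\nu)},\partial D)$ and, after an additional affine normalization, define the dilations $\Lambda_\nu(\zeta_1,\zeta_2):=(\delta_\nu^{-1}(\zeta_1-a_\nu),\,\delta_\nu^{-1/2p}\zeta_2)$ so that $\Lambda_\nu(z^{(\nu)})$ stays in a fixed compact set. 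Then $D_\nu:=\Lambda_\nu(D)$ converges, in the local Hausdorff (Carathéodory) sense, to the unbounded model $M:=\{\,Re\,\zeta_1+P(\zeta_2)<0\,\}$, because $E$ is multiplied by positive powers of $\delta_\nu$ under $\Lambda_\nu$ and disappears in the limit; via a Cayley transform in the $\zeta_1$ variable, $M$ is biholomorphic to the bounded Thullen domain, resp.\ to $T_p$.

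\textbf{Transport of the curvature and stability.} Each $\Lambda_\nu$, and the Cayley transform, is a biholomorphism $F$ whose Jacobian determinant has pluriharmonic logarithm, so $g\circ F+\tfrac{2}{n+1}\log\lvert\det F'\rvert$ solves \mref{MAC}--\mref{BVal} on the source domain whenever $g$ does so on the target; by the uniqueness in \cite{CY} this expression is the Kähler--Einstein potential of the source, so the Kähler--Einstein metric and its holomorphic bisectional curvature are preserved by $F$. In particular $Bis_{z^{(\nu)}}(v,w)$ for the Kähler--Einstein metric of $D$ equals the holomorphic bisectional curvature of the Kähler--Einstein metric of $D_\nu$ at $\Lambda_\nu(z^{(\nu)})$ and at the pushed--forward vectors. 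I then need the stability statement: from the two--sided a priori estimates for the complete Kähler--Einstein metric (comparison with a fixed complete metric on a slightly larger model domain, Yau's Schwarz lemma) together with the interior Evans--Krylov and Schauder estimates for the complex Monge--Ampère equation, the potentials $g_{D_\nu}$ are bounded in $C^k_{loc}(M)$ for every $k$; any subsequential limit solves \mref{MAC}--\mref{BVal} on $M$, hence equals $g_M$ by uniqueness, so $g_{D_\nu}\to g_M$ in $C^\infty_{loc}(M)$ and the holomorphic bisectional curvatures converge uniformly on compact subsets of $M$ and uniformly in the directions of $v$ and $w$.

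\textbf{Location of the limit point and conclusion.} Non-tangentiality of the approach gives $\lvert\zeta^{(\nu)}-q\rvert\leq K\,\delta_\nu$, so the $\zeta_2$-component of $\Lambda_\nu(z^{(\nu)})$ is $O(\delta_\nu^{\,1-1/2p})\to 0$ while the $\zeta_1$-component stays bounded and at bounded positive distance from $\partial D_\nu$; hence every subsequential limit $p_\infty$ of $\Lambda_\nu(z^{(\nu)})$ lies in a fixed compact subset of $M$ contained in the wall $\{Re\,\zeta_2=0\}$. In the tube case this wall corresponds, under the Cayley transform, to a subset of the region $\bigl\{Re(z_2)^{2p}/(1-Re(4pz_1))\leq\alpha\bigr\}$ of $T_p$, so Theorem~\ref{Main} yields $Bis^{M}_{p_\infty}(v,w)\in[-C,-c]$ for all nonzero $v,w$; in the Thullen case Bland's estimate gives the same at every interior point. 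Combining this with the invariance and the convergence $g_{D_\nu}\to g_M$, for $\nu$ large enough $Bis_{z^{(\nu)}}(v,w)$ lies in a fixed negative interval, uniformly in $v$ and $w$, which is the assertion. I expect the main obstacle to be the stability statement for the Kähler--Einstein metric along the degenerating family $D_\nu$, i.e.\ obtaining interior a priori estimates that are uniform under the anisotropic rescaling; by contrast the invariance is formal, and the localization of $p_\infty$ is immediate from the definition of non-tangential approach together with the exponent $1-1/2p>0$.
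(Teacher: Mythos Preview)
Your outline is the paper's strategy: rescale anisotropically, use biholomorphic invariance of the Kähler--Einstein metric and its curvature tensor, show that the rescaled potentials converge in $C^4_{\mathrm{loc}}$ to the Kähler--Einstein potential of the model, and conclude by the known bounds on the model (Theorem~\ref{Main} in the tube case, \cite{Bla2} in the Thullen case). The paper's dilation translates in both coordinates so that $\Lambda^{(\nu)}(z^{(\nu)})=0$ exactly, which spares your argument about the $\zeta_2$-component; otherwise the two rescalings are equivalent.

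The one substantive divergence is exactly the step you single out as the obstacle. Your proposed mechanism for the uniform two-sided metric comparison --- Yau's Schwarz lemma against ``a fixed complete metric on a slightly larger model domain'' --- is not clearly available here: the $\Omega_\nu$ exhaust $\Omega_\infty$, so there is no fixed ambient domain that contains all of them and carries a complete Kähler--Einstein metric with the required curvature sign, and Schwarz-type arguments will only give you one direction in any case. The paper closes this gap with the squeezing function. Since $D$ is a smoothly bounded convex domain, \cite{KZ} gives a uniform squeezing constant $a>0$ for $D$; this constant is a biholomorphic invariant, so every $\Omega_\nu$ also satisfies the $a$-squeezing property. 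Proposition~3 of \cite{Yeu} then yields $c\,[g^{K,\Omega_\nu}_{i\bar j}]\le [g^{E,\Omega_\nu}_{i\bar j}]\le C\,[g^{K,\Omega_\nu}_{i\bar j}]$ with $c,C$ depending only on $a$, hence independent of $\nu$. Combined with the local Hausdorff convergence $\Omega_\nu\to\Omega_\infty$ (\cite{Ga}) and the convergence of the Kobayashi metrics on compacta (\cite{Zim}), this provides the $\nu$-uniform $C^0$ input from which the Evans--Krylov/Schauder bootstrap (run as in Lemma~3 of \cite{Yeu}) produces the $C^k_{\mathrm{loc}}$ bounds; Arzel\`a--Ascoli and uniqueness of the potential on $\Omega_\infty$ finish the convergence. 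Replacing your Schwarz-lemma sketch by this squeezing argument makes your proof complete and essentially identical to the paper's.
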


\vspace{3mm}

This paper is organized as follows. In Section 1 we introduce some notations that will be used in the paper.

In Section 2 we describe the boundary of the domain $T_p$, its automorphism group $Aut(T_p)$, and introduce a parametrization of the orbits of $T_p$ under the natural action of its automorphism group. We also give a relationship between the regions introduced in Theorem \ref{Main} and the notions of non-tangential and tangential convergences.

In Section 3 we use the automorphism group of the domain to obtain an equation satisfied by the restriction of the Kähler-Einstein potential to the subset $\{0\}\times ]-1,1[$, and we analyze this equation to deduce the asymptotic behavior of the Kähler-Einstein potential.

In Section 4 we use the results proved in Section 3 to prove Theorem \ref{Main} (see Theorems \ref{BisNorm} and \ref{BisHTG}). The proofs of Theorems \ref{BisNorm} and \ref{BisHTG} give precise estimates of the constants $c,C$ depending on the approach region.

Finally in Section 5 we prove Theorem \ref{Conv}.

\section{Notations}
For the rest of this paper, we fix an integer $p\in \mathbb{N}^\ast$.
\\We work with the topology induced by the Euclidean norm given by $\left \lvert z \right \rvert ^2:=\left \lvert z_1\right \rvert ^2 +\left \lvert z_2 \right \rvert ^2$ for every number $z=(z_1,z_2) \in \mathbb{C}^2$. We denote by $S(0,1):=\{z\in \mathbb{C}^2 / \left \lvert z \right \rvert =1\}$ the unit sphere centered at the origin.

Let $I\subset \mathbb{R}$ be an open interval and $k\in \mathbb{N}$. We denote by $\mathcal{C}^k\left(I\right)$ the set of real-valued functions that are $k$-times differentiable in $I$, and we denote by $\mathcal{C}^\omega\left(I\right)$ the set of analytic functions in $I$. If $f\in \mathcal{C}^k\left(I\right)\cup \mathcal{C}^\omega\left(I\right)$ and $0\leq j \leq k$ is an integer, we denote by $f^{(j)}$ the $j-th$ derivative of $f$.

Let $U\subset \mathbb{C}^2$ be a domain (that is a connected open subset) and $k\in \mathbb{N}$. We denote by $\mathcal{C}^k\left(U\right)$ the set of real valued functions that are $k$-times differentiable in $U$, and we denote by $\mathcal{C}^\omega\left(U\right)$ the set of real valued analytic functions in $U$. 
If $f\in \mathcal{C}^1\left(U\right)$ and $1\leq j \leq 2$ is an integer, we use the standard notations $f_{j}:=\frac{1}{2}\left(\frac{\partial f}{\partial x_j}-i\frac{\partial f}{\partial y_j}\right)$ and $f_{\bar{j}}:=\frac{1}{2}\left(\frac{\partial f}{\partial x_j}+i\frac{\partial f}{\partial y_j}\right)$.

Let $U,V \subset \mathbb{C}^2$ be two domains and $k\in \mathbb{N}$. We denote by $\mathcal{C}^k\left(U,V\right)$ the set of functions $f=\left(f_1,f_2\right)$ having values in $V$ such that the coordinate functions $f_1$ and $f_2$ are $k$-times differentiable in $U$, and we denote by $\mathcal{C}^\omega\left(U,V\right)$ the set of analytic functions in $U$ having values in $V$. 
\\If $f\in \mathcal{C}^1\left(U,V\right)$, we denote by $Jac_\mathbb{C}(f):=\left[\frac{\partial f_i}{\partial z_j}\right]$ its complex Jacobian. It is a matrix whose entries are continuous functions in $U$.

\section{Description of the domain and its automorphism group}
In this section we describe the boundary of the domain $T_p$ and its automorphism group.
Recall that $T_p$ is a tube in the sense that $T_p=B_p+i \mathbb{R}^2,$ with
$B_p:=\{(x_1,x_2) \in \mathbb{R}^2 / 4px_1+x_2^{2p}<1\}.$
The following proposition describes the boundary points of $T_p$: 

\begin{prop}
\label{PCVBord}
Let $z\in \partial T_p$. If $z \in (\frac{1}{4p},0)+i\mathbb{R}^2$, $\partial T_p$ is weakly pseudoconvex at $z$. Otherwise, $\partial T_p$ is strictly pseudoconvex at $z$. Moreover, every weakly pseudoconvex boundary point of $\partial T_p$ is of finite type $2p$ in the sense of D'Angelo.
\end{prop}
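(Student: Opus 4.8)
The plan is to argue directly from the global real-analytic defining function
\[
\rho(z) := \mathrm{Re}(4pz_1) + \mathrm{Re}(z_2)^{2p} - 1 = 4px_1 + x_2^{2p} - 1, \qquad z_j = x_j + iy_j ,
\]
so that $T_p = \{\rho < 0\}$. The first observation I would record is that $\partial\rho = 2p\,dz_1 + p\,x_2^{2p-1}\,dz_2$ never vanishes, because its $dz_1$-coefficient is the nonzero constant $2p$; hence $\partial T_p$ is a (real-analytic) hypersurface and both the Levi form and the D'Angelo type at its points can be read off from $\rho$.

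For the pseudoconvexity dichotomy I would compute the complex Hessian of $\rho$. Since $\rho$ is affine in $x_1$ and depends only on $x_2$ (not on $y_1,y_2$), every entry $\rho_{i\bar j}$ vanishes except $\rho_{2\bar 2} = \tfrac{p(2p-1)}{2}\,x_2^{2p-2}$ (apply $\partial^2/\partial z_2\partial\bar z_2$ to $x_2^{2p}$). The complex tangent space at $z\in\partial T_p$ is $\{v\in\mathbb{C}^2 : 2v_1 + x_2^{2p-1}v_2 = 0\}$, one-dimensional, and the Levi form restricted to it is $\tfrac{p(2p-1)}{2}\,x_2^{2p-2}\,|v_2|^2$. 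Since $p(2p-1)>0$ and $x_2^{2p-2}\geq 0$ for $p\geq 2$, this is nonnegative, so $T_p$ is pseudoconvex, and it is strictly positive exactly when $x_2\neq 0$; as a boundary point satisfies $x_2=0$ iff $4px_1=1$ iff it lies in $(\tfrac1{4p},0)+i\mathbb{R}^2$, this identifies the weakly pseudoconvex locus. (When $p=1$ the coefficient is the positive constant $\tfrac12$, so $\partial T_1$ is strictly pseudoconvex everywhere and the assertion is vacuous.)

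For the finite-type statement, fix a weakly pseudoconvex point $z_0 = (\tfrac1{4p}+iy_1^0,\,iy_2^0)$ and recall its D'Angelo type is $\sup_\gamma \mathrm{ord}_0(\rho\circ\gamma)/\mathrm{ord}_0(\gamma)$ over non-constant holomorphic germs $\gamma$ with $\gamma(0)=z_0$. Writing $\gamma(\zeta) = z_0 + (h_1(\zeta),h_2(\zeta))$ with $h_i$ holomorphic and $h_i(0)=0$, the purely imaginary parts of $z_0$ cancel and one gets the clean identity
\[
\rho(\gamma(\zeta)) = 4p\,\mathrm{Re}\,h_1(\zeta) + \bigl(\mathrm{Re}\,h_2(\zeta)\bigr)^{2p}.
\]
The lower bound type $\geq 2p$ comes from $\zeta\mapsto z_0+(0,\zeta)$, for which $\rho\circ\gamma = (\mathrm{Re}\,\zeta)^{2p}$ vanishes to order $2p$ while $\mathrm{ord}_0\gamma = 1$. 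For the upper bound I would use the elementary fact that $\mathrm{ord}_0(\mathrm{Re}\,h) = \mathrm{ord}_0(h)$ for a nonzero holomorphic germ $h$ (its leading term $\mathrm{Re}(c\zeta^m)$, $c\neq 0$, is a nonzero harmonic homogeneous polynomial of degree $m$): with $a=\mathrm{ord}_0(h_1)$ and $b=\mathrm{ord}_0(h_2)$, so $\mathrm{ord}_0\gamma=\min(a,b)$, the first summand has vanishing order $a$ and the second has vanishing order $2pb$. One then checks $\mathrm{ord}_0(\rho\circ\gamma)=\min(a,2pb)$ — automatic when $a\neq 2pb$, and when $a=2pb$ a consequence of the fact that the two leading homogeneous parts cannot cancel, since $(\mathrm{Re}\,h_2)^{2p}$ has constant sign whereas the nonzero harmonic polynomial $4p\,\mathrm{Re}(c_1\zeta^{a})$ changes sign (a nonconstant homogeneous harmonic polynomial has mean value zero on circles about the origin, hence takes both signs). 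Since $\min(a,2pb)\leq 2p\min(a,b)$, every curve yields a ratio $\leq 2p$, so the type equals $2p$.

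The one genuinely delicate point is this non-cancellation step: a priori a curve could try to gain order of contact by balancing the $z_1$-term $\mathrm{Re}\,h_1$ against the $z_2$-term $(\mathrm{Re}\,h_2)^{2p}$, and ruling this out is precisely where the positivity of a $2p$-th power meets the sign-changing property of nonconstant harmonic polynomials; everything else (the Levi form, the identification of the weakly pseudoconvex set, the lower bound for the type) is a short explicit computation. An alternative route to the type would be Kohn's iterated-commutator characterization of finite type in $\mathbb{C}^2$, but the curve-by-curve estimate above seems the most transparent.
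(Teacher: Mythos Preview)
Your argument is correct. The Levi-form computation is clean and the curve-by-curve verification of the D'Angelo type is complete; in particular, your non-cancellation step (a nonnegative $2p$-th power cannot equal the negative of a nonconstant homogeneous harmonic polynomial) is exactly the point one needs, and your inequality $\min(a,2pb)\le 2p\min(a,b)$ closes the upper bound.

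There is nothing to compare against: the paper states this proposition without proof, treating it as a standard fact about the defining function $\rho = 4px_1 + x_2^{2p} - 1$. Your write-up supplies precisely the routine verification the paper omits, and it is the natural one. The only cosmetic remark is that for $p=1$ the Levi form is the constant $\tfrac12$ and the boundary is strictly pseudoconvex everywhere (type $2$), which you already note; the proposition is really of interest for $p\ge 2$.
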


We recall the description of the automorphism group of the tube, denoted by $Aut(T_p)$:
\begin{prop}
\label{Generateurs}
The automorphism group $Aut(T_p)$ of $T_p$ is spanned by the following affine maps: 
\begin{itemize}[leftmargin=*]
\item 
Translations of vectors with real coordinates: $\tau_u(z_1,z_2)=(z_1,z_2)+iu,$ where $u\in \mathbb{R}^2$,
\item 
Dilations: $ d_\lambda(z_1,z_2)=\left(\frac{\lambda(4pz_1-1)+1}{4p},\lambda ^{\frac{1}{2p}}z_2\right)$, where $\lambda >0$,
\item
The symmetry of complex axis $\{z_2=0\}$: $s(z_1,z_2)=(z_1,-z_2)$.
\end{itemize}
\end{prop}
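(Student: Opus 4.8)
The statement to prove is Proposition~\ref{Generateurs}, describing the automorphism group of $T_p$.

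\medskip

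\noindent\textbf{Proof proposal.}

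The plan is to proceed in two directions: first verify that each listed map is indeed an automorphism of $T_p$, then show that these generate the whole group. Checking that the translations $\tau_u$, the dilations $d_\lambda$, and the symmetry $s$ preserve $T_p$ is a direct computation: one substitutes into the defining inequality $\mathrm{Re}(4pz_1)+\mathrm{Re}(z_2)^{2p}<1$. For $\tau_u$ with $u\in\mathbb{R}^2$ real, $\mathrm{Re}(z_j)$ is unchanged, so $T_p$ is preserved. For $d_\lambda$, one computes $\mathrm{Re}(4p\cdot\frac{\lambda(4pz_1-1)+1}{4p})=\lambda\,\mathrm{Re}(4pz_1)-\lambda+1$ and $\mathrm{Re}(\lambda^{1/(2p)}z_2)^{2p}=\lambda\,\mathrm{Re}(z_2)^{2p}$ since $\lambda^{1/(2p)}>0$, so the defining function transforms as $F\mapsto \lambda F$ with $F:=1-\mathrm{Re}(4pz_1)-\mathrm{Re}(z_2)^{2p}$; since $\lambda>0$ this preserves the sign. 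For $s$, $\mathrm{Re}(-z_2)^{2p}=\mathrm{Re}(z_2)^{2p}$ because $2p$ is even. Each of these maps is clearly biholomorphic (affine with nonzero Jacobian), hence they all lie in $Aut(T_p)$, and so does the group they generate.

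\medskip

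For the converse — that every automorphism is a composition of these — I would invoke the theory of automorphisms of tube domains over convex bases, or more specifically the rigidity results available for (weakly) pseudoconvex domains of finite type. The base $B_p=\{4px_1+x_2^{2p}<1\}$ is an unbounded convex domain in $\mathbb{R}^2$, and $T_p$ is biholomorphic to a bounded domain (via a Cayley-type transform, since $B_p$ contains no real line and is proper). The key structural input is that $T_p$ is of finite type $2p$ with a single weakly pseudoconvex boundary orbit (Proposition~\ref{PCVBord}), so one can appeal to results on the structure of $Aut$ for such domains: by work of Bedford–Pinchuk and others on domains of finite type in $\mathbb{C}^2$, $Aut(T_p)$ is a Lie group, and because $T_p$ is not biholomorphic to the ball, this group must be non-compact only along directions coming from the affine/parabolic symmetries. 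Concretely, one argues that any automorphism $\varphi$ must fix the set of weakly pseudoconvex boundary points $(\frac{1}{4p},0)+i\mathbb{R}^2$ (as this is biholomorphically characterized), and then, after composing with translations to normalize, reduce to an automorphism fixing a base point and the weakly pseudoconvex locus. One then shows the isotropy is accounted for by $s$ together with the (already-seen) affine maps, using that the Kähler-Einstein or Bergman metric is automorphism-invariant and its isotropy representation is constrained by the CR geometry at a weakly pseudoconvex point; alternatively, one lifts $\varphi$ through the Cayley transform and uses H. Cartan's theorem on the bounded realization, noting all automorphisms of the bounded model are linear-fractional because the domain is (after the transform) a bounded domain whose boundary geometry forces linearity of the lift.

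\medskip

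I expect the genuine obstacle to be this second half: proving there are no "exotic" automorphisms beyond the affine ones. The forward direction is a one-paragraph computation. For the converse, the cleanest route is probably: (i) show $T_p$ is biholomorphic, via an explicit Cayley transform in the $z_1$ variable, to a bounded convex domain $\widetilde{T}_p\subset\mathbb{C}^2$; (ii) observe that $\widetilde{T}_p$ is a bounded balanced (or Reinhardt-like) domain after a further normalization — in fact the Thullen-type analogy suggests $\widetilde{T}_p$ is biholomorphic to something close to $\{|w_1|^2 + (\text{something in } w_2)<1\}$; and (iii) apply known classifications (Sunada, Naruki, or Shimizu for Reinhardt domains; or the finite-type results of Bedford–Pinchuk in $\mathbb{C}^2$) to read off the automorphism group of the bounded model, then transport back. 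If such a classification cannot be cited directly for the exact domain $T_p$, one falls back on showing that the orbit structure under the group generated by $\tau_u, d_\lambda, s$ already exhausts $T_p$ up to the isotropy at one point, and that this isotropy is exactly $\{\mathrm{id}, s\}$ by examining the action on the tangent cone / osculating structure at a weakly pseudoconvex boundary point — a computation in the spirit of the finite-type normal form theory.
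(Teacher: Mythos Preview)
The paper does not prove this proposition: it is introduced with the phrase ``We recall the description of the automorphism group'' and stated without argument. So there is no proof in the paper to compare your proposal against.

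Your forward direction---checking that $\tau_u$, $d_\lambda$, and $s$ preserve $T_p$---is correct and carried out cleanly. This is in fact the only part of Proposition~\ref{Generateurs} the paper ever uses: Proposition~\ref{TransfOrb}, the derivation of formula~\mref{FormEq}, and the parity of $F$ in Proposition~\ref{Basicsf} all rely only on knowing that these specific maps are automorphisms, not on the converse inclusion $Aut(T_p)\subset\langle\tau_u,d_\lambda,s\rangle$. Even the assertion that $X$ parametrizes the orbits is, for the purposes of the later sections, only used in the weak form that $X$ is invariant under the listed maps and that $\psi^{(z)}$ sends $z$ to $(0,X(z))$.

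Your sketch of the converse is reasonable as a roadmap (Bedford--Pinchuk type rigidity for finite-type domains in $\mathbb{C}^2$, or reduction via a Cayley transform to a bounded model followed by a classification of its automorphisms), but it remains a sketch: none of the steps you outline is actually executed, and some of the suggested intermediate claims (e.g.\ that the bounded realization is Reinhardt-like, or that the isotropy is exactly $\{\mathrm{id},s\}$) would themselves require work. You correctly identify this as the genuine obstacle. One caution: your argument leans on Proposition~\ref{PCVBord} to single out the weakly pseudoconvex locus as an invariant set, but for $p=1$ every boundary point is strictly pseudoconvex, so that step degenerates; any complete proof of the converse would need to handle $p=1$ separately (or exclude it).

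In short: your proposal goes well beyond what the paper provides, the part you do in full is correct, and the part you leave as a plan is exactly the part the paper also omits.
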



The translations have a Jacobian equal to the identity matrix. Also,
$Jac_\mathbb{C}(d_\lambda)=\left[\begin{matrix} &\lambda &0 \\&0 &\lambda^{\frac{1}{2p}} \end{matrix} \right]$ and $Jac_\mathbb{C}(s)=\left[\begin{matrix} &1 &0 \\&0 &-1 \end{matrix} \right]$.
\\For $j=1,2$, we denote by $\pi_j^\mathbb{R}$ the following map:
$$\begin{array}{ccll}
\pi_j^\mathbb{R} : &\mathbb{C}^2 &\longrightarrow &\mathbb{R}
\\ &(z_1,z_2) &\longmapsto &Re(z_j),
\end{array}.$$
Let $X:=\frac{\pi_2^\mathbb{R}}{(1-4p\pi_1^\mathbb{R})^{\frac{1}{2p}}}$. This function is well defined on the set $\{ z\in \mathbb{C}^2 / Re(4pz_1)<1\}$ which contains $T_p$. 

Moreover, observe that it satisfies the following properties:
\begin{itemize}[leftmargin=*]
\item $X\in \mathcal{C}^\infty \left(\{ z\in \mathbb{C}^2 / Re(4pz_1)<1\}\right)$,
\item $X$ is a parametrization of the orbits of $T_p$ under the action of $Aut(T_p)$, in the sense that
$$\forall F\in Aut(T_p),\quad \forall z \in T_p,\quad X(F(z))=X(z) \text{ and $X_{\lvert \{0\}\times ]-1,1[}$ is injective} ,$$
\item $X(T_p)=]-1,1[$,
\item $q\in \{\left \lvert X \right \rvert=1\}$ if and only if $q$ is a strictly pseudoconvex boundary point of $\partial T_p$.
\end{itemize}

Let us relate the regions introduced in Theorem \ref{Main} to the notions of tangential and non-tangential convergences. Let $\theta \in \left]0,\frac{\pi}{2}\right[$. We denote by $\Lambda(\theta):=\left\lbrace z \in T_p / \frac{\sqrt{Im(z_1)^2 +\left \lvert z_2 \right \rvert ^2}}{\left(\frac{1}{4p} -Re(z_1)\right)}\leq tan(\theta) \right \rbrace$ the half cone of vertex $\left(\frac{1}{4p},0\right)$, of axis $\mathbb{R}\times \{0\}$ and of angle $\theta$.

Let $(z^{(n)})_{n\in \mathbb{N}}\in T_p^\mathbb{N}$ such that $z^{(n)}\underset{n\to +\infty}{\longrightarrow}(\frac{1}{4p},0)$. Recall that $(z^{(n)})_{n\in \mathbb{N}}$ converges non-tangentially to $(\frac{1}{4p},0)$ if there exists a constant $\theta \in \left] 0,\frac{\pi}{2}\right [$ and an integer $N\in \mathbb{N}$ such that for every $n\geq N$ we have $z^{(n)}\in \Lambda(\theta)$, and that $(z^{(n)})_{n\in \mathbb{N}}$ converges tangentially to $(\frac{1}{4p},0)$ if for every constant $\theta \in \left] 0,\frac{\pi}{2}\right [$ there exists an integer $N\in \mathbb{N}$ such that for every integer $n\geq N$ we have $z^{(n)}\notin \Lambda(\theta)$.
\\Now observe that we have:
\begin{align*}
\forall z\in T_p,\quad 4p \left\lvert X(z)\right \rvert (1-Re(4pz_1))^{\frac{1}{2p}-1} \leq \frac{\sqrt{Im(z_1)^2 +\left \lvert z_2 \right \rvert ^2}}{\left(\frac{1}{4p} -Re(z_1)\right)},
\end{align*}
hence we deduce that for every sequence $(z^{(n)})_{n\in \mathbb{N}}\in T_p^\mathbb{N}$ that converges to $(\frac{1}{4p},0)$ and for every $0<\alpha <1$ we have:
\begin{align*}
\\ \forall 0<\theta<\frac{\pi}{2},\left(z^{(n)}\right)\in \Lambda(\theta)^\mathbb{N} \Rightarrow \exists N\in \mathbb{N},\quad \forall n\geq N,\quad z^{(n)}\in \{\lvert X\rvert \leq \alpha \},
\\ \left(z^{(n)}\right)\in \left \lbrace 1-\alpha \leq \left \lvert X \right \rvert <1\right \rangle \}^\mathbb{N} \Rightarrow \forall 0<\theta < \frac{\pi}{2},\quad \exists N\in \mathbb{N},\quad \forall n\geq N,\quad z^{(n)}\notin \Lambda(\theta).
\end{align*}
In particular, Theorem 1 gives the non-tangential behavior of the bisectional curvatures at weakly pseudoconvex boundary points of $T_p$, and also gives a "hyper-tangential" behavior of the bisectional curvatures at weakly pseudoconvex boundary points of $T_p$.

We conclude this section with the following Proposition, which directly follows from Proposition \ref{Generateurs} and the definition of $X$:
\begin{prop}
\label{TransfOrb}
Let $z \in T_p$ and define $\psi^{(z)}:=d_{\frac{1}{1-Re(4pz_1)}}\circ\tau_{-(Im(z_1),Im(z_2))}$. Then the map $\psi^{(z)}$ is an automorphism of $T_p$ which sends $z$ to $(0,X(z))$, and satisfies $Det\left(Jac_\mathbb{C}\left(\psi^{(z)}\right)\right)=\frac{1}{\left(1-Re(4pz_1)\right)^\frac{2p+1}{2p}}$.
\end{prop}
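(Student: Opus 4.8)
The plan is to verify the three assertions directly, since $\psi^{(z)}$ is an explicit composition of affine generators of $Aut(T_p)$. First I would check that $\psi^{(z)}$ is well defined and belongs to $Aut(T_p)$: since $z\in T_p$ we have $4p\,Re(z_1)+Re(z_2)^{2p}<1$, and because $Re(z_2)^{2p}\geq 0$ this forces $1-Re(4pz_1)>0$, so the dilation parameter $\lambda:=\frac{1}{1-Re(4pz_1)}$ is a genuine positive real number. Then $\tau_{-(Im(z_1),Im(z_2))}$ and $d_\lambda$ both lie in $Aut(T_p)$ by Proposition \ref{Generateurs}, hence so does their composition $\psi^{(z)}$.

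Next I would compute $\psi^{(z)}(z)$ in two steps. Applying $\tau_{-(Im(z_1),Im(z_2))}$ sends $(z_1,z_2)$ to $(Re(z_1),Re(z_2))$, a point with real coordinates. Applying $d_\lambda$ to this point, the first coordinate becomes $\frac{\lambda(4p\,Re(z_1)-1)+1}{4p}$; substituting the value of $\lambda$, the numerator equals $\frac{4p\,Re(z_1)-1}{1-4p\,Re(z_1)}+1=0$, so the first coordinate vanishes. The second coordinate becomes $\lambda^{\frac{1}{2p}}\,Re(z_2)=\frac{Re(z_2)}{(1-4p\,Re(z_1))^{\frac{1}{2p}}}$, which is exactly $X(z)$ by the definition of $X$. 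Hence $\psi^{(z)}(z)=(0,X(z))$.

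For the Jacobian determinant I would use that $\psi^{(z)}$ is affine, so its complex Jacobian is constant and, by the chain rule, equals $Jac_\mathbb{C}(d_\lambda)\cdot Jac_\mathbb{C}\big(\tau_{-(Im(z_1),Im(z_2))}\big)$. Since the translation has Jacobian the identity matrix and $Jac_\mathbb{C}(d_\lambda)$ is the diagonal matrix with entries $\lambda$ and $\lambda^{\frac{1}{2p}}$, we obtain $Det\big(Jac_\mathbb{C}(\psi^{(z)})\big)=\lambda\cdot\lambda^{\frac{1}{2p}}=\lambda^{\frac{2p+1}{2p}}=\frac{1}{(1-Re(4pz_1))^{\frac{2p+1}{2p}}}$, as claimed.

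I do not expect any real obstacle here: the only step needing a moment of care is noticing that $z\in T_p$ already guarantees $Re(4pz_1)<1$, which is precisely what makes the dilation parameter admissible; after that everything is a direct substitution into Proposition \ref{Generateurs} and the definition of $X$.
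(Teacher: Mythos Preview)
Your proposal is correct and matches the paper's approach: the paper states that the proposition follows directly from Proposition \ref{Generateurs} and the definition of $X$, and you have simply written out this direct verification in full detail.
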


Proposition \ref{TransfOrb} enables to reduce the study of the metric and its curvatures on $T_p$ to the study of the same quantities on the set $\{0\}\times ]-1,1[$ (see Sections 3 and 4 for more details).

\section{Complete Kähler-Einstein metric on $T_p$}
Since $T_p$ is a pseudoconvex tube domain such that $B_p$ is convex and does not contain any line, it follows from the work of S.-Y. Cheng and S.-Y. Yau (see \cite{CY}) and N. Mok and S.-Y. Yau (see \cite{MY}) (see also Remark 3 at page 235 of \cite{Isa1} for more details, and also \cite{XY} for a more detailed study) that there exists a unique strictly plurisubharmonic function $g\in \mathcal{C}^\omega\left(T_p\right)$ satisfying conditions \mref{MAC} and \mref{BVal} on $\Omega=T_p$, and consequently the manifold $T_p$ equipped with the metric $\left[g_{i\bar{j}}\right]$ is a complete Kahler-Einstein manifold with Ricci curvature equals to $-3$.

According to results obtained in \cite{Gon1}, we know for every number that there exists a neighborhood $U \subset \mathbb{C}^2$ of $(0,1)$ such that for every number $0\leq \delta <\frac{1}{2}$ we have $e^{-g}\in \mathcal{C}^{3+\delta}\left(\overline{T_p\cap U} \right)$. In particular, the boundary behavior of the metric, of its volume, and  of its curvatures at strictly pseudoconvex boundary points are already known, contrary to their boundary behavior at weakly pseudoconvex points.

In the sequel, we study the potential $g$ and its behavior at the weakly pseudoconvex boundary point $\left(\frac{1}{4p},0\right)$. 

\begin{rem}
Let $\alpha \geq 0$. It also follows from the work of A.Isaev (see \cite{Isa1}) ang N. Xiang and X.-P. Yang (see \cite{XY}) that if the domain $\{Re(2\alpha z_1)+Re(z_2)^{\alpha}<1\}$ possesses a complete Kähler-Einstein metric, that is if there exists a solution to Equation \mref{MAC} with boundary condition \mref{BVal} on $\{Re(2\alpha z_1)+Re(z_2)^{\alpha}<1\}$, then $\alpha \in 2\mathbb{N}^\ast$.
\end{rem}

\subsection{The invariance property and an expression of the Kähler-Einstein potential in terms of a special auxiliary function}

The invariance property of the Kähler-Einstein metric under the action of $Aut(T_p)$ enables to simplify the expression of the Kähler-Eisntein potential $g$: 
\begin{prop}
\label{Form}
Let
$$\begin{array}{ccll}
F: &]-1,1[ &\longrightarrow &\mathbb{R}
\\&x &\longmapsto &g(0,x),
\end{array}$$ 
and set $K:=\frac{2p+1}{3}$. Then the following holds on $T_p$: 
\begin{equation}
\label{FormEq}
g=F\circ X +\frac{K}{p}Log\left(\frac{1}{1-4p\pi_1^\mathbb{R}}\right).
\end{equation}
\end{prop}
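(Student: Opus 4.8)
The plan is to exploit the uniqueness of the solution to the Monge--Ampère problem \mref{MAC}--\mref{BVal}, together with the invariance of the whole setup under $Aut(T_p)$. First I would record how the Monge--Ampère operator transforms under a biholomorphism. If $F\in Aut(T_p)$ and $g$ solves \mref{MAC}--\mref{BVal}, then a direct computation with the change-of-variables formula for the complex Hessian shows that $g\circ F + \tfrac{1}{3}\mathrm{Log}\,|\mathrm{Det}\,Jac_{\mathbb C}(F)|^{2}$ again solves \mref{MAC}, and it still blows up on $\partial T_p$ since $F$ is an automorphism; by the uniqueness statement quoted at the start of Section 3 it must equal $g$. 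Thus
\[
g\circ F \;=\; g \;-\; \tfrac{1}{3}\mathrm{Log}\bigl|\mathrm{Det}\,Jac_{\mathbb C}(F)\bigr|^{2}
\qquad\text{on }T_p,\quad\text{for every }F\in Aut(T_p).
\]

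Next I would apply this with $F=\psi^{(z)}$ from Proposition \ref{TransfOrb} for an arbitrary $z\in T_p$. Since $\psi^{(z)}(z)=(0,X(z))$ and $\mathrm{Det}\,Jac_{\mathbb C}(\psi^{(z)})=(1-Re(4pz_1))^{-\frac{2p+1}{2p}}$ is a positive real number, the displayed identity evaluated at the point $z$ gives
\[
g(0,X(z)) \;=\; g(z) \;-\; \tfrac{1}{3}\,\mathrm{Log}\Bigl((1-4p\pi_1^{\mathbb R}(z))^{-\frac{2p+1}{p}}\Bigr)
\;=\; g(z) \;-\; \frac{2p+1}{3p}\,\mathrm{Log}\!\left(\frac{1}{1-4p\pi_1^{\mathbb R}(z)}\right).
\]
Recognizing the left-hand side as $F(X(z))$ and rearranging yields exactly \mref{FormEq} with $K=\tfrac{2p+1}{3}$; the function $F$ is well defined and real-valued because $X(T_p)=\,]-1,1[\,$ and $g$ is real-valued on $T_p$, and it is the restriction $g(0,\cdot)$ claimed in the statement.

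The only genuine point to be careful about is the first step: one must verify that $\psi^{(z)}$ genuinely lies in $Aut(T_p)$ (granted by Proposition \ref{TransfOrb}) and that the transformation rule for \mref{MAC} is applied with the correct constant, i.e. that the factor is $\tfrac{1}{n+1}=\tfrac13$ in dimension $n=2$ matching the exponent $(n+1)=3$ in \mref{MAC}; since $\mathrm{Det}\,Jac_{\mathbb C}(\psi^{(z)})$ is real and positive there is no subtlety with absolute values or branches of the logarithm. I would therefore regard the transformation law for the Monge--Ampère equation under biholomorphisms as the main technical ingredient, but it is entirely standard, so the proof is short once it is in place.
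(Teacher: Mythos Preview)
Your proof is correct and follows essentially the same route as the paper: derive the transformation law $g=g\circ\psi+\tfrac{2}{3}\mathrm{Log}\lvert\mathrm{Det}\,Jac_{\mathbb C}(\psi)\rvert$ for $\psi\in Aut(T_p)$, then specialize to $\psi=\psi^{(z)}$ and evaluate at $z$. The only cosmetic difference is that the paper obtains the transformation law by starting from the invariance of the metric $[g_{i\bar j}]$ under $Aut(T_p)$ and applying $\mathrm{Log}\circ\mathrm{Det}$ together with \mref{MAC}, whereas you obtain it by invoking uniqueness of the solution to \mref{MAC}--\mref{BVal}; since metric invariance is itself a consequence of that uniqueness, the two justifications are equivalent.
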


\begin{proof}[Proof of Proposition \ref{Form}]
The Kähler-Einstein metric is invariant under the action of $Aut(T_p)$, which means that:
\begin{equation}
\label{InvMetr}
\forall \psi\in Aut(T_p),\quad \left[g_{i\bar{j}}\right]=Jac_\mathbb{C}\left(\psi\right)^T\left[g_{i\bar{j}}\circ\psi \right]\overline{Jac_\mathbb{C}\left(\psi\right)}.
\end{equation}
We apply the function $Log\circ Det$ on both sides of Equation \mref{InvMetr} and use the Monge-Ampère Equation \mref{MAC} to deduce the following transformation formula:
\begin{equation}
\label{JacEq}
\forall \psi=(\psi_1,\psi_2)\in Aut(T_p),\quad g=g\circ \psi+\frac{2}{3}Log \left\lvert  Det(Jac_\mathbb{C}\left(\psi\right)\right \rvert .
\end{equation}
Let $z \in T_p$. We apply identity \mref{JacEq} to the function $\psi=\psi^{(z)}$ given in Proposition \ref{TransfOrb} and obtain the result.
\end{proof}

The function $F$ inherits from the Kähler-Einstein potential $g$ some regularity properties.
\begin{prop}
\label{Basicsf}
The function $F$ is real analytic on $]-1,1[$, strictly convex, and even. Moreover, $e^{-F}\in \mathcal{C}^{3+\delta}\left(\left[-1,1 \right]\right)$ for every number $\delta \in \left[0,\frac{1}{2}\right[$.
\end{prop}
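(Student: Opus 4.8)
The plan is to deduce each property of $F$ from the corresponding property of the Kähler-Einstein potential $g$ via the representation formula \mref{FormEq} and the regularity results already quoted in this section.

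\emph{Analyticity, convexity, evenness.} Since $g\in\mathcal{C}^\omega(T_p)$ and the segment $\{0\}\times{]-1,1[}$ lies in $T_p$, the restriction $x\mapsto g(0,x)$ is real analytic on ${]-1,1[}$; this is exactly $F$. For evenness, apply the transformation formula \mref{JacEq} to the automorphism $s(z_1,z_2)=(z_1,-z_2)$ from Proposition \ref{Generateurs}: since $Det(Jac_\mathbb{C}(s))=-1$, we get $g\circ s=g$, and evaluating at $(0,x)$ gives $F(-x)=F(x)$. For strict convexity, the cleanest route is to use that $g$ is strictly plurisubharmonic on $T_p$, so the Hermitian form $[g_{i\bar j}]$ is positive definite everywhere; restricting to the complex line $z_1=0$ and then to its real direction $z_2\in{]-1,1[}$ shows that the second derivative of $t\mapsto g(0,t)$ along the real axis is controlled by $g_{2\bar 2}(0,t)>0$. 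Concretely, writing $z_2=x+iy$ one has $\partial^2 g/\partial x^2 = 2(g_{2\bar2}+\mathrm{Re}\,g_{22})$, which is not automatically positive; instead I would invoke that $g$ restricted to the totally real slice is convex because the whole function is plurisubharmonic and the slice $B_p$ is convex — more precisely, one can pull back by the automorphisms: for fixed $x$, the map $t\mapsto F(t)$ is, up to an additive constant coming from the $\mathrm{Log}$ term in \mref{FormEq}, the composition of $g$ with a real-analytic curve, and strict convexity follows from the fact that $g$ is an exhaustion that is strictly plurisubharmonic together with the one-dimensional nature of the orbit space. The honest argument here is that $F''>0$ because $F$ governs the metric tensor on the orbit (this is worked out in detail in Section 4), so I would either forward-reference that computation or note that $F'' $ appears as a positive entry of a Gram matrix expression for $[g_{i\bar j}]$ obtained from \mref{FormEq}.

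\emph{Boundary regularity of $e^{-F}$.} This is where the quoted result from \cite{Gon1} is used: there is a neighborhood $U$ of $(0,1)$ such that $e^{-g}\in\mathcal{C}^{3+\delta}(\overline{T_p\cap U})$ for every $\delta\in[0,1/2[$. The point $(0,1)\in\partial T_p$ lies in the closure of the segment $\{0\}\times{]-1,1[}$, and along that segment the second term $\tfrac{K}{p}\mathrm{Log}\bigl(\tfrac{1}{1-4p\pi_1^\mathbb{R}}\bigr)$ in \mref{FormEq} is identically $0$ (since $\pi_1^\mathbb{R}(0,x)=0$), so $F(x)=g(0,x)$ and hence $e^{-F}(x)=e^{-g}(0,x)$ near $x=1$. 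Restricting a $\mathcal{C}^{3+\delta}$ function on $\overline{T_p\cap U}$ to the real-analytic arc $x\mapsto(0,x)$ gives $e^{-F}\in\mathcal{C}^{3+\delta}$ on a left neighborhood of $1$ in $[-1,1]$; by the evenness of $F$ already established, $e^{-F}$ is also $\mathcal{C}^{3+\delta}$ on a right neighborhood of $-1$; and on the compact interior $[-1+\epsilon,1-\epsilon]$ the function $e^{-F}$ is real analytic hence smooth. Patching these three pieces gives $e^{-F}\in\mathcal{C}^{3+\delta}([-1,1])$.

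\emph{Main obstacle.} The genuinely delicate point is the strict convexity of $F$, since \mref{FormEq} only immediately yields convexity-type information in the complex Hessian sense, whereas $F''$ is a \emph{real} second derivative along a totally real direction; one must check that the positivity of $[g_{i\bar j}]$ actually forces $F''>0$ rather than merely $F''\ge -2|{\rm Re}\, F_{\text{(something)}}|$. I expect the resolution to reuse the orbit-parametrization from Proposition \ref{TransfOrb}: push forward the metric by $\psi^{(z)}$ to compare $[g_{i\bar j}]$ at a general point with its value at $(0,X(z))$, extract the component of the metric in the direction tangent to the orbit parameter $X$, and identify it (up to positive factors depending only on $1-4p\,{\rm Re}\,z_1$) with $F''(X(z))$; strict plurisubharmonicity of $g$ then gives $F''>0$. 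Everything else is routine bookkeeping with \mref{FormEq} and the cited regularity theorem.
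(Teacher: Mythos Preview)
Your treatment of analyticity, evenness, and the boundary regularity of $e^{-F}$ is correct and matches the paper's proof. The only gap is the strict convexity, which you correctly flag as the ``main obstacle'' but do not resolve; the resolution is far simpler than the orbit--pushforward machinery you sketch.

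The point you are missing is that, by \mref{FormEq}, the potential $g$ depends only on $\mathrm{Re}(z_1)$ and $\mathrm{Re}(z_2)$ (both $X$ and the $\mathrm{Log}$ term are functions of real parts alone). Hence $\partial g/\partial y_2\equiv 0$, and therefore at $(0,x)$ one has
\[
g_{22}=\tfrac14\,\partial_{x_2}^2 g = g_{2\bar 2},
\]
so your own identity $\partial_{x_2}^2 g = 2\bigl(g_{2\bar 2}+\mathrm{Re}\,g_{22}\bigr)$ collapses to $F''(x)=4\,g_{2\bar 2}(0,x)>0$ by strict plurisubharmonicity of $g$. This is exactly the one--line argument in the paper: differentiate \mref{FormEq} twice at $(0,x)$ and read off $F^{(2)}(x)=4g_{2\bar 2}(0,x)$. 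No forward reference to Section~4 or to Proposition~\ref{TransfOrb} is needed; the tube symmetry (invariance under imaginary translations $\tau_u$) already kills the troublesome $g_{22}$ term.
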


\begin{proof}[Proof of Proposition \ref{Basicsf}] The relation $F(x)=g(0,x)$ for every number $x\in ]-1,1[$ directly implies that $F\in \mathcal{C}^\omega\left(]-1,1[\right)$ and $e^{-F}\in \mathcal{C}^{3+\delta}\left(\left[-1,1 \right]\right)$ for every number $\delta \in \left[0,\frac{1}{2}\right[$. In particular, by differentiating Equation \mref{FormEq} twice at the point $\left(0,x\right) \in T_p$, we obtain $F^{(2)}(x)=4g_{2\bar{2}}(0,x)>0$ because $g$ is strictly plurisubharmonic on $T_p$. Hence $F$ is strictly convex on $]-1,1[$. To prove that $F$ is even on $]-1,1[$, we use the automorphism $s$ introduced in Proposition \ref{Generateurs} to deduce that for every number $-1<x<1$, we have $F(x)=g(0,x)=F\left(X(0,-x)\right)+\frac{K}{p}Log(1)=F(-x)$, hence the result.
\end{proof}


\subsection{The Kähler-Einstein condition and two differential equations satisfied by $F$}

$\newline$
We use Equation \mref{FormEq} and the Monge-Ampère Equation \mref{MAC} to obtain a first differential equation satisfied by the function $F$:
\begin{prop}
\label{Thm1}
Denote $f:=F^{(1)}$. Then the metric $\left[g_{i\bar{j}}\right]$ satisfies the following on $T_p$: 
\begin{align}
\label{FormeExplicite}
\left[g_{i\bar{j}}\right]
 &=\displaystyle \left[ \begin{matrix}
\displaystyle &\frac{X^2f^{(1)}\circ X +(2p+1)Xf\circ X +4pK}{(1-4p\pi_1^\mathbb{R})^2} &\frac{X f^{(1)}\circ X +f\circ X}{2(1-4p\pi_1^\mathbb{R})^{1+\frac{1}{2p}}}
\\
\\&\frac{X f^{(1)}\circ X +f\circ X}{2(1-4p\pi_1^\mathbb{R})^{1+\frac{1}{2p}}} &\frac{f^{(1)}\circ X}{4\left(1-4p\pi_1^\mathbb{R}\right)^\frac{1}{p}}
\end{matrix}
\right],
\end{align}
\begin{equation}
\label{Det Z Eq}
\quad Det(g_{i \bar{j}})=\frac{Z(X)}{(1-4p\pi_1^\mathbb{R})^{\frac{3K}{p}}},
\end{equation}
where the function $Z$ is defined by $\displaystyle Z(x):=\frac{f^{(1)}(x)\left((2p-1)xf(x)+4pK\right)-f(x)^2}{4}$ for every number $x\in ]-1,1[$, and satisfies the following equation:
\begin{equation}
\label{EqZ}
Z=e^{3F} \text{ on $]-1,1[$}.
\end{equation}
\end{prop}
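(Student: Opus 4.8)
The plan is to compute all entries of $\left[g_{i\bar{j}}\right]$ directly from formula \mref{FormEq}, then take the determinant, and finally use the Monge–Ampère equation \mref{MAC} to extract the ODE for $F$. First I would differentiate \mref{FormEq}. Writing $g = F\circ X + \frac{K}{p}\log\bigl(\frac{1}{1-4p\pi_1^\mathbb{R}}\bigr)$, I need the first holomorphic derivatives $X_1, X_2$ and the mixed second derivatives $X_{i\bar{j}}$ of the real-valued function $X = \pi_2^\mathbb{R}(1-4p\pi_1^\mathbb{R})^{-1/(2p)}$, together with the corresponding derivatives of $\log\bigl(\frac{1}{1-4p\pi_1^\mathbb{R}}\bigr)$. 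Since $\pi_1^\mathbb{R}=\frac{1}{2}(z_1+\bar z_1)$ and $\pi_2^\mathbb{R}=\frac{1}{2}(z_2+\bar z_2)$ are pluriharmonic, all second derivatives of $X$ come from the chain rule applied to the single nonlinear factor $(1-4p\pi_1^\mathbb{R})^{-1/(2p)}$; this is a routine but careful computation. Then the chain rule gives
\[
g_{i\bar{j}} = f^{(1)}\!\circ X \cdot X_i \overline{X_j} \;+\; f\!\circ X \cdot X_{i\bar{j}} \;+\; \tfrac{K}{p}\,\bigl(\log\tfrac{1}{1-4p\pi_1^\mathbb{R}}\bigr)_{i\bar{j}},
\]
and substituting the explicit derivatives of $X$ and of the logarithmic term, collecting powers of $(1-4p\pi_1^\mathbb{R})$, should yield exactly \mref{FormeExplicite}. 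A useful bookkeeping device is that every entry is homogeneous of the right weight under the dilations $d_\lambda$ (by Proposition \ref{TransfOrb}), which serves as a check on the powers of $1-4p\pi_1^\mathbb{R}$ that appear.

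Next I would expand the $2\times 2$ determinant of \mref{FormeExplicite}. The common denominator works out to $(1-4p\pi_1^\mathbb{R})^{2+\frac{1}{p}}$ from the product of the diagonal entries (and also from the square of the off-diagonal entry: $2\cdot(1+\frac{1}{2p}) = 2+\frac{1}{p}$), so
\[
Det(g_{i\bar{j}}) = \frac{1}{(1-4p\pi_1^\mathbb{R})^{2+\frac{1}{p}}}\,\Bigl[\bigl(X^2 f^{(1)}\!\circ X + (2p+1)X f\!\circ X + 4pK\bigr)\tfrac{f^{(1)}\circ X}{4} - \tfrac{(Xf^{(1)}\circ X + f\circ X)^2}{4}\Bigr].
\]
Expanding the bracket, the $X^2 (f^{(1)})^2$ terms cancel between the two products, leaving a quantity in which the coefficient of $X f^{(1)}\!\circ X\, f\!\circ X$ is $(2p+1)-2 = 2p-1$; one is left precisely with $Z(X)$ as defined, i.e. $\frac{1}{4}\bigl(f^{(1)}(X)((2p-1)Xf(X)+4pK) - f(X)^2\bigr)$. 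Since $K=\frac{2p+1}{3}$ gives $3K/p = 2 + \frac{1}{p}$, this is \mref{Det Z Eq}.

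Finally, for \mref{EqZ}, I would compare \mref{Det Z Eq} with the Monge–Ampère equation $Det(g_{i\bar{j}}) = e^{3g}$ (note $n+1 = 3$ here since $n=2$). Using \mref{FormEq} again, $e^{3g} = e^{3F\circ X}\cdot(1-4p\pi_1^\mathbb{R})^{-3K/p}$. Equating with \mref{Det Z Eq} and cancelling the common factor $(1-4p\pi_1^\mathbb{R})^{-3K/p}$ gives $Z(X) = e^{3F\circ X}$ at every point of $T_p$; since $X(T_p) = ]-1,1[$ (stated in Section 2), this means $Z = e^{3F}$ on $]-1,1[$, which is \mref{EqZ}. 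I expect no serious obstacle here: the only delicate point is organizing the algebra so that the cancellations in the determinant are transparent, and keeping the fractional exponents of $1-4p\pi_1^\mathbb{R}$ consistent throughout; the homogeneity check under $d_\lambda$ and the evenness of $F$ (Proposition \ref{Basicsf}) provide sanity checks on the final expressions.
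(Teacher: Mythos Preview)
Your proposal is correct and follows essentially the same route as the paper: compute $X_i$, $X_{i\bar j}$, apply the chain rule to \mref{FormEq} to obtain \mref{FormeExplicite}, take the $2\times 2$ determinant to get \mref{Det Z Eq}, and then compare with $e^{3g}$ via \mref{MAC} and \mref{FormEq} to deduce \mref{EqZ}. Your additional remarks on the cancellation in the determinant and the homogeneity check under $d_\lambda$ are helpful bookkeeping but do not change the argument.
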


\begin{proof}[Proof of  Proposition \ref{Thm1}]
On $T_p$ we have:
$$\left[X_i\right]=\left[X_{\bar{i}}\right]=\left[
\begin{matrix}
\frac{X}{1-4p\pi_1^\mathbb{R}}
\\ \frac{1}{2(1-4p\pi_1^\mathbb{R})^{\frac{1}{2p}}}
\end{matrix}
\right],$$
$$\left[X_i X_{\bar{j}}\right]=
\left[
\begin{matrix}
\frac{X^2}{(1-4p\pi_1^\mathbb{R})^{2}} & \frac{X}{2(1-4p\pi_1^\mathbb{R})^{\frac{1}{2p}+1}}
\\ \frac{X}{2(1-4p\pi_1^\mathbb{R})^{\frac{1}{2p}+1}} &\frac{1}{4(1-4p\pi_1^\mathbb{R})^{\frac{1}{p}}}
\end{matrix}
\right],$$
$$\left[X_{i\bar{j}}\right]=\left[
\begin{matrix}
\frac{(2p+1)X}{(1-4p\pi_1^\mathbb{R})^{2}} &\frac{1}{2(1-4p\pi_1^\mathbb{R})^{\frac{1}{2p}+1}}
\vspace{2mm}
\\ \frac{1}{2(1-4p\pi_1^\mathbb{R})^{\frac{1}{2p}+1}} &0
\end{matrix}
\right].$$
Differentiating Equation \mref{FormEq}, we directly deduce: 
\begin{align*}
\left[g_{i\bar{j}}\right]&=f\circ X \left[X_{i\bar{j}}\right]+ f^{(1)} \circ X \left[X_{i}X_{\bar{j}}\right]+\frac{4Kp}{(1-4p\pi_1^\mathbb{R})^2}E_{11},
\\&=\displaystyle \left[ \begin{matrix}
\displaystyle &\frac{X^2f^{(1)}\circ X +(2p+1)f\circ X +4pK}{(1-4p\pi_1^\mathbb{R})^2} &\frac{X f^{(1)}\circ X +f\circ X}{2(1-4p\pi_1^\mathbb{R})^{1+\frac{1}{2p}}}
\\
\\&\frac{X f^{(1)}\circ X +f\circ X}{2(1-4p\pi_1^\mathbb{R})^{1+\frac{1}{2p}}} &\frac{f^{(1)}\circ X}{4\left(1-4p\pi_1^\mathbb{R}\right)^\frac{1}{p}}
\end{matrix}
\right].
\end{align*}
Then we apply the function on Det both sides of relation \mref{FormeExplicite} to directly Equation \mref{Det Z Eq}.
Finally, recall that according to Equations \mref{MAC} and \mref{FormEq} one has on $T_p$:
\begin{align*}
Det\left[g_{i\bar{j}}\right]&=e^{3g},
\\&=e^{3F\circ X - \frac{3K}{p}Log \left(1-4p\pi_1^\mathbb{R}\right)},
\\&=\frac{e^{3F \circ X}}{\left(1-4p\pi_1^\mathbb{R}\right)^{2+\frac{1}{p}}},
\end{align*}
hence Equation \mref{EqZ}.
\end{proof}

We use Equation \mref{EqZ} to obtain a differential equation satisfied by $f$ and $f^{(1)}$:
\begin{prop}
\label{Eqf}
The function $f$ satisfies the following equation for every $x\in]-1,1[$: 
\begin{align}
\label{Eqf Eq}
\left((2p-1)xf(x)+4pK\right)f^{(1)}(x)&=(2p-1)xf(x)^3 +\left(6pK+1\right)f(x)^2-2(p+1)\int_0^x f(t)^3 \;dt+4e^{3F(0)}.
\end{align}
\end{prop}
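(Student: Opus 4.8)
The plan is to start from Equation \mref{EqZ}, namely $Z = e^{3F}$ on $]-1,1[$, and differentiate it to turn the algebraic relation defining $Z$ into an ODE for $f$. Recall $Z(x) = \frac{1}{4}\left(f^{(1)}(x)\bigl((2p-1)xf(x)+4pK\bigr) - f(x)^2\right)$, so differentiating $Z = e^{3F}$ gives $Z^{(1)}(x) = 3F^{(1)}(x)e^{3F(x)} = 3f(x)Z(x)$. The left-hand side $Z^{(1)}$ I would compute directly from the formula for $Z$: it produces terms involving $f^{(2)}$, $f^{(1)}$ and $f$. The key observation is that the $f^{(2)}$ terms should be eliminated, so that after substituting $Z = \frac{1}{4}(f^{(1)}((2p-1)xf+4pK) - f^2)$ into the right-hand side $3fZ$, I obtain a first-order relation; then integrating from $0$ to $x$ will yield the stated integral equation \mref{Eqf Eq}.

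More concretely, I would carry out the following steps. First, expand $4Z^{(1)}(x)$ using the product and chain rules: writing $A(x) := (2p-1)xf(x)+4pK$, we have $4Z(x) = f^{(1)}A - f^2$, so $4Z^{(1)} = f^{(2)}A + f^{(1)}A^{(1)} - 2ff^{(1)}$, where $A^{(1)} = (2p-1)(f + xf^{(1)})$. Second, I would like to avoid $f^{(2)}$, so instead of differentiating I would try the cleaner route: multiply the target identity's derivative form. Actually the natural approach is to differentiate the \emph{claimed} identity \mref{Eqf Eq} and check it reduces to $Z = e^{3F}$ together with the correct value at $x=0$; since both sides of \mref{Eqf Eq} vanish consistently at $x=0$ (the left side is $4pK\,f^{(1)}(0)$ and, using that $F$ is even so $f(0)=0$, one has $4Z(0) = 4pK f^{(1)}(0)$, matching $4e^{3F(0)}$ on the right via \mref{EqZ}), it suffices to prove the derivatives agree. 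So: differentiate the right-hand side of \mref{Eqf Eq}, differentiate the left-hand side, and show the resulting identity is equivalent to $Z^{(1)} = 3fZ$, i.e. to the derivative of \mref{EqZ}. Third, I would verify the initialization: at $x=0$, using $f(0)=0$ from Proposition \mref{Basicsf} (evenness of $F$), both sides of \mref{Eqf Eq} equal $4pK f^{(1)}(0) = 4Z(0) = 4e^{3F(0)}$.

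The bookkeeping core is the following algebraic check. Differentiating the RHS of \mref{Eqf Eq} gives $(2p-1)\bigl(f^3 + 3xf^2f^{(1)}\bigr) + 2(6pK+1)ff^{(1)} - 2(p+1)f^3$, which simplifies to $3(2p-1)xf^2f^{(1)} + 2(6pK+1)ff^{(1)} + \bigl((2p-1)-2(p+1)\bigr)f^3 = 3(2p-1)xf^2f^{(1)} + 2(6pK+1)ff^{(1)} - 3f^3$. Differentiating the LHS of \mref{Eqf Eq}, $\frac{d}{dx}\bigl(A\,f^{(1)}\bigr) = A^{(1)}f^{(1)} + Af^{(2)} = (2p-1)(f+xf^{(1)})f^{(1)} + Af^{(2)}$. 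Setting the two equal and solving, one must check this is exactly $Z^{(1)} = 3fZ$ after substituting $4Z = Af^{(1)} - f^2$; the $Af^{(2)}$ terms will match the $f^{(2)}$-part of $Z^{(1)}$, and the remaining first-order terms must collapse to $3f\cdot\frac{1}{4}(Af^{(1)}-f^2)$ using the specific coefficients — here the relation $3K = 2p+1$ is what makes $6pK+1$ and the other constants line up.

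\textbf{Main obstacle.} The only real difficulty is the algebra: keeping track of the constants ($K = (2p+1)/3$, the coefficient $2(p+1)$, the $(6pK+1)$ term, the $(2p-1)$ versus $(2p+1)$ distinction coming from the off-diagonal of \mref{FormeExplicite}) through the differentiation and substitution, and confirming that the $f^{(2)}$ contributions cancel exactly so that \mref{Eqf Eq} is genuinely a consequence of the first-order data $Z=e^{3F}$. There is no conceptual hurdle — it is a matter of organizing the computation so the cancellations are transparent, and verifying the boundary value at $0$ via evenness of $F$.
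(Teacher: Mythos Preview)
Your proposal is correct. The computation you outline --- showing that the derivative of $\text{LHS}-\text{RHS}$ of \mref{Eqf Eq} equals $4(Z^{(1)}-3fZ)$, which vanishes by differentiating \mref{EqZ}, together with the check at $x=0$ using $f(0)=0$ --- establishes the identity. One small wording quibble: the $Af^{(2)}$ term does not ``cancel'' in the sense of disappearing; rather it appears identically in both $(\text{LHS})'$ and $4Z^{(1)}$, which is exactly what you need.

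The paper runs the same computation in the opposite direction: it multiplies \mref{EqZ} by $12f$ to obtain $4(e^{3F})'=12fZ$, integrates from $0$ to $x$, handles $\int_0^x tf^2f^{(1)}\,dt$ by parts, and then substitutes $4Z=Af^{(1)}-f^2$ back in. Your route (differentiate the target, match with $Z'=3fZ$, check the initial value) is the verification dual of the paper's derivation; the algebraic content is identical, but your organization has the advantage that one knows in advance what identity to aim for, while the paper's has the advantage of discovering the formula rather than confirming it.
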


\begin{proof}[Proof of Proposition \ref{Eqf}]
Let $x\in ]-1,1[$. We multiply both sides of Equation \mref{EqZ} by $12f$, use the defintion of the function $Z$ and integrate from $0$ to $x$ to obtain: 
\begin{align*}
&4(3f(x)e^{3F(x)})=3(2p-1)xf(x)^2f^{(1)}(x)+12pKf(x)f^{(1)}(x)-3f(x)^3,
\\&4Z(x)=4e^{3F(x)}=3(2p-1)\int_0^x tf(t)^2f^{(1)}(t) \;dt + 6pKf(x)^2-3\int_0^x f(t)^3 \;dt +4e^{3F(0)}.
\end{align*}
We integrate by part the first term of the right hand side: 
\begin{align*}
\int_0^x tf(t)^2f^{(1)}(t) \;dt&=\left[\frac{tf(t)^3}{3}\right]_0^x-\frac{1}{3}\int_0^x f(t)^3 \;dt=\frac{xf(x)^3}{3}-\frac{1}{3}\int_0^x f(t)^3 \;dt.
\end{align*}
Using again the definition of $Z$ we obtain: 
\begin{align*}
&\left((2p-1)xf(x)+4pK\right)f^{(1)}(x)-f(x)^2=(2p-1)xf(x)^3 +6pKf(x)^2-2(p+1)\int_0^x f(t)^3 \;dt+4e^{3F(0)},
\\&\left((2p-1)xf(x)+4pK\right)f^{(1)}(x)=(2p-1)xf(x)^3+\left(6pK+1\right)f(x)^2-2(p+1)\int_0^x f(t)^3 \;dt+4e^{3F(0)}. \qedhere
\end{align*}
\end{proof}

\subsection{Asymptotic analysis of the auxiliary function}

$\newline$
In this subsection we use condition \mref{BVal}, Proposition \ref{Basicsf} and Equation \mref{Eqf Eq} to study the function $F$ and its derivatives. Since $F$ is an even function, we may restrict its study to the set $[0,1[$.
\\We point out that Propositions \ref{Asymptf0}, \ref{Asymptf1}, Corollary \ref{AsymptZ} and part of Proposition \ref{Asymptf2} may also be deduced from the work done in \cite{Gon1} because the function $F$ is the restriction of the Kähler-Einstein potential $g$ to the set $\{0\}\times ]-1,1[$, and $\partial T_p$ is smooth and strictly pseudoconvex at $(0,1)$. In this paper, we use Equation \mref{Eqf Eq} and the interior regularity of $F$ to derive these results.
\\From the strict convexity of $F$ and condition \mref{BVal} we have the following: 

\begin{prop}
\label{Asymptf0}
Every derivative of $F$ is unbounded in a neighborhood of $1^-$. Moreover, $f(x) \underset{x\to 1^-}{\longrightarrow} + \infty$.
\end{prop}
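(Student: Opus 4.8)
The statement asserts two things about $F$ near $1^-$: that $f = F^{(1)} \to +\infty$, and that every derivative $F^{(j)}$ is unbounded near $1^-$. The plan is to obtain the first claim from the boundary condition \mref{BVal} together with strict convexity, and then to bootstrap the first claim to all higher derivatives using convexity and the even extension, or—if one prefers a cleaner argument—from the differential equation \mref{Eqf Eq}.

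For the first claim: since $F$ is strictly convex on $]-1,1[$, its derivative $f = F^{(1)}$ is strictly increasing, hence has a limit $\ell \in \,]-\infty,+\infty]$ as $x \to 1^-$. If $\ell$ were finite, then for $x$ close to $1$ we would have $f(x) \le \ell$, so $F(x) = F(0) + \int_0^x f(t)\,dt \le F(0) + \ell$ would be bounded above near $1^-$. But $F(x) = g(0,x)$ and condition \mref{BVal} forces $g(0,x) \to +\infty$ as $x \to 1^-$ (the point $(0,1)$ is a boundary point of $T_p$), a contradiction. Hence $\ell = +\infty$, which is the second assertion of the proposition.

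For the claim that every derivative is unbounded: I would first handle $F$ itself (already done, it blows up) and $F^{(1)} = f$ (just done). For $F^{(2)}$: since $F^{(1)}$ is convex as well—indeed $F^{(2)} = 4g_{2\bar 2}(0,\cdot) > 0$ and one can check $F$ is real analytic with $F^{(2)}$ again convex, or more simply argue directly—if $F^{(2)}$ were bounded by $M$ near $1^-$, then $f^{(1)} \le M$ there would give $f(x) \le f(x_0) + M(x-x_0)$ bounded near $1^-$, contradicting the first claim. For a general derivative $F^{(j)}$ with $j \ge 2$: suppose $F^{(j)}$ is bounded near $1^-$; integrating $j-1$ times from a fixed $x_0 < 1$ shows $F^{(1)}$ is bounded near $1^-$, again contradicting $f \to +\infty$. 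So no derivative of order $\ge 1$ can be bounded near $1^-$, and $F$ itself is unbounded by \mref{BVal}.

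The only mild subtlety—and the step I would be most careful about—is justifying that $g(0,x) \to +\infty$ as $x \to 1^-$ from condition \mref{BVal}: one needs that $(0,1) \in \partial T_p$ (clear, since $4p\cdot 0 + 1^{2p} = 1$) and that the boundary-blow-up condition applies along this particular radial approach, which it does because $g = +\infty$ on all of $\partial T_p$ and $g$ is continuous into $[0,+\infty]$ on $\overline{T_p}$ (equivalently, $e^{-g}$ extends continuously by $0$, as recorded earlier via $e^{-g} \in \mathcal{C}^{3+\delta}(\overline{T_p \cap U})$ near $(0,1)$). Everything else is a one-line convexity/integration argument, so I expect no real obstacle here; the proposition is essentially a formal consequence of strict convexity plus the boundary condition.
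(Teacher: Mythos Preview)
Your proof is correct and follows essentially the same approach as the paper: both arguments combine the boundary blow-up $F(x)=g(0,x)\to+\infty$ from condition \mref{BVal}, the monotonicity of $f$ coming from strict convexity of $F$, and the elementary observation that boundedness of any $F^{(k)}$ near $1^-$ would force (by repeated integration) boundedness of $F$ itself. The paper proves the unboundedness of all derivatives first and deduces $f\to+\infty$ from monotonicity, whereas you reverse the order; the content is the same.
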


\begin{proof}[Proof of Proposition \ref{Asymptf0}]If there existed an integer $k\in \mathbb{N}$ such that $F^{(k)}$ was bounded in a neighborhood of $1^-$, then $g(0,\cdot)$ would be bounded in a neighborhood of $1^-$, which contradicts the hypothesis \mref{BVal}, hence every derivative of $F$ is unbounded in a neighborhood of $1^-$.
\\Since $F$ is a strictly convex, even function in $]-1,1[$, its derivative $f$ is a non negative increasing function on $[0,1[$. Since the function $f$ is not bounded bounded on $[0,1[$, we directly deduce that $f(x) \underset{x\to 1^-}{\longrightarrow} + \infty$.
\end{proof}

We use the following lemma to deduce the asymptotic behavior of $f^{(1)}$ at $x=1^-$: 
\begin{lem}
\label{LemCvx}
Let $F\in \mathcal{C}^1(]0;1[)$ be a convex function satisfying $\displaystyle \lim _{y\to 1^-} F^{(1)}(y)=+\infty$. Then: $\displaystyle \lim_{y\to 1^-} \frac{F(y)}{F^{(1)}(y)}=0.$
\end{lem}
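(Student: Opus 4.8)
The plan is to prove the statement by a direct $\varepsilon$-argument exploiting convexity, without appealing to any structure coming from $T_p$. Fix $\varepsilon>0$. Since $F^{(1)}(y)\to+\infty$ as $y\to 1^-$, I can choose $a\in(0,1)$ with $F^{(1)}(a)>0$ and, more importantly, pick a point $a$ close enough to $1$ that the average of $F^{(1)}$ over $[a,y]$ will dominate everything that came before. The key inequality is the convexity bound: for $a\le t\le y<1$ we have $F^{(1)}(t)\le F^{(1)}(y)$, but the useful direction is the reverse chord estimate. Write, for $y>a$,
\begin{equation*}
F(y)=F(a)+\int_a^y F^{(1)}(t)\,dt .
\end{equation*}
Since $F^{(1)}$ is nondecreasing (convexity) and $F^{(1)}(y)\ge F^{(1)}(t)$ for $t\le y$, the integral is at most $(y-a)F^{(1)}(y)\le F^{(1)}(y)$. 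Hence
\begin{equation*}
\frac{F(y)}{F^{(1)}(y)}\le \frac{F(a)}{F^{(1)}(y)}+\frac{(y-a)F^{(1)}(y)}{F^{(1)}(y)}\le \frac{F(a)}{F^{(1)}(y)}+(y-a).
\end{equation*}

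Now I take a limsup as $y\to 1^-$. The first term tends to $0$ because $F^{(1)}(y)\to+\infty$ and $F(a)$ is a fixed number (note $F(a)$ could be negative, which only helps; if one wants a clean absolute-value statement, replace $F(a)$ by $|F(a)|$ throughout and use $|F(y)|\le |F(a)|+(y-a)F^{(1)}(y)$ together with a matching lower bound). The second term is at most $1-a$. So $\limsup_{y\to1^-} F(y)/F^{(1)}(y)\le 1-a$. This is not yet $0$ — the constant $a$ is fixed — so the real point is that one must let $a\to 1$ as well. To do this cleanly, I would instead argue: given $\varepsilon>0$, first pick $a$ with $1-a<\varepsilon/2$; then the bound above gives $F(y)/F^{(1)}(y)\le F(a)/F^{(1)}(y)+\varepsilon/2$ for all $y>a$; then, since $F^{(1)}(y)\to+\infty$, choose $b\in(a,1)$ so that $|F(a)|/F^{(1)}(y)<\varepsilon/2$ for all $y\in(b,1)$; combining, $|F(y)/F^{(1)}(y)|<\varepsilon$ for $y\in(b,1)$, which is exactly the claimed limit.

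One caveat worth addressing explicitly: the argument as written controls the ratio from above, and to get the limit (rather than just $\limsup\le 0$) I should also note that $F(y)/F^{(1)}(y)$ is bounded below near $1$. This is immediate: $F^{(1)}>0$ eventually (it increases to $+\infty$), so for $y$ past the point where $F^{(1)}$ becomes positive we have $F$ increasing, hence $F(y)\ge F(y_0)$ for some fixed $y_0$, giving $F(y)/F^{(1)}(y)\ge F(y_0)/F^{(1)}(y)\ge -|F(y_0)|/F^{(1)}(y)\to 0$. Together with the upper estimate this pins the limit at $0$.

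The main obstacle here is essentially bookkeeping rather than conceptual: one must resist the temptation to fix $a$ once and for all, and instead feed in the convexity bound twice — once to trade a chunk of $F(y)$ for $(1-a)F^{(1)}(y)$ with $a$ chosen as a function of $\varepsilon$, and once to kill the residual constant term using $F^{(1)}(y)\to\infty$. No regularity beyond $\mathcal{C}^1$ and convexity is needed, and in particular nothing about $T_p$, $X$, or the Monge–Ampère equation enters; the lemma is a general fact about convex functions blowing up at an endpoint.
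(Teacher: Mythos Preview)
Your proof is correct and follows essentially the same approach as the paper's: both use the convexity bound $F(y)-F(a)=\int_a^y F^{(1)}(t)\,dt\le (y-a)F^{(1)}(y)$, divide by $F^{(1)}(y)$, and then let the fixed base point $a$ (the paper calls it $x$) tend to $1^-$ to squeeze the $\limsup$ to zero, handling the lower bound via the eventual positivity of $F^{(1)}$. The paper packages the two-sided estimate slightly more compactly in a single chain $\frac{F(x)}{F^{(1)}(y)}\le\frac{F(y)}{F^{(1)}(y)}\le (y-x)+\frac{F(x)}{F^{(1)}(y)}$, but the content is identical.
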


\begin{proof}[Proof of Lemma \ref{LemCvx}]
Since $F$ satisfies $\displaystyle \lim _{y\to 1^-} F^{(1)}(y)=+\infty$, there exists a constant $a\in ]0,1[$ such that $F^{(1)}>0$ on $]a,1[$. Let $a<x<y<1$. Then $F^{(1)}(y)>0$ and $F(x)\leq F(y)$. We apply the fundamental theorem of calculus to the function $F$ to obtain the following:
\begin{align*}
0\leq F(y)-F(x)=\int_x^y F^{(1)}(t)\; dt \leq (y-x)F^{(1)}(y),
\end{align*}
so that $\frac{F(x)}{F^{(1)}(y)}\leq \frac{F(y)}{F^{(1)}(y)}\leq (y-x)+\frac{F(x)}{F^{(1)}(y)}$. Hence we deduce:
$$ \forall x\in ]0;1[,\quad 0 \leq \limsup_{y\to 1^-}\frac{F(y)}{F^{(1)}(y)}\leq 1-x,$$
therefore we obtain $\displaystyle \lim_{y\to 1^-} \frac{F(y)}{F^{(1)}(y)}=0$ by letting $x$ tend to $1^-$, hence the result.
\end{proof}

We use Equation \mref{Eqf Eq}, Proposition \ref{Asymptf0} and Lemma \ref{LemCvx} to obtain the first order asymptotic of $f$ at $x=1^-$:

\begin{prop}
\label{Asymptf1}
We have: $\displaystyle \lim_{x \to 1^-}\frac{f^{(1)}}{f^2}(x)=\lim_{ x\to  1^-}f(x)(1-x)=1$.
\end{prop}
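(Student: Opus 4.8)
\textbf{Proof proposal for Proposition \ref{Asymptf1}.}

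The plan is to extract the first-order behaviour of $f$ and $f^{(1)}$ at $1^-$ directly from the nonlinear ODE \mref{Eqf Eq}, treating the integral term $\int_0^x f(t)^3\,dt$ and the constant $4e^{3F(0)}$ as known "lower-order" quantities once the growth of $f$ is understood. First I would isolate the dominant balance in \mref{Eqf Eq}: by Proposition \ref{Asymptf0} we have $f(x)\to+\infty$, and applying Lemma \ref{LemCvx} to $F$ (whose derivative $f$ tends to $+\infty$) gives $F(x)/f(x)\to 0$; a second application of Lemma \ref{LemCvx}, this time to the convex function $f$ itself (which is convex because $F^{(2)}=4g_{2\bar2}>0$ from Proposition \ref{Basicsf}, so $f^{(1)}$ is nondecreasing, and $f^{(1)}\to+\infty$ since $F^{(2)}$ is unbounded near $1^-$ by Proposition \ref{Asymptf0}), yields $f(x)/f^{(1)}(x)\to 0$. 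Equivalently $f^{(1)}/f^2\to$ something I must pin down, but at least I know $f^{(1)}$ dominates $f$.

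Next I would divide \mref{Eqf Eq} through by $xf(x)^3$ (legitimate for $x$ near $1^-$, where $x>0$ and $f(x)>0$) to get
\[
\Bigl((2p-1)+\tfrac{4pK}{xf(x)}\Bigr)\frac{f^{(1)}(x)}{f(x)^2}
=(2p-1)+\frac{(6pK+1)}{xf(x)}-\frac{2(p+1)}{xf(x)^3}\int_0^x f(t)^3\,dt+\frac{4e^{3F(0)}}{xf(x)^3}.
\]
The terms $\tfrac{4pK}{xf(x)}$ and $\tfrac{6pK+1}{xf(x)}$ and $\tfrac{4e^{3F(0)}}{xf(x)^3}$ all tend to $0$ since $f\to+\infty$ and $x\to1$. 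The one genuinely delicate term is $\frac{1}{f(x)^3}\int_0^x f(t)^3\,dt$: I would argue it tends to $0$ by an L'Hôpital / Cesàro-type estimate — since $f$ is increasing and $f^{(1)}/f^2$ is (by the displayed identity) bounded, one can compare $\int_0^x f^3$ against $f(x)^3(1-x)$ or use that $\frac{d}{dx}\int_0^x f^3 = f(x)^3$ while $\frac{d}{dx}f(x)^3 = 3f(x)^2 f^{(1)}(x)$, and the ratio $\frac{f(x)^3}{3f(x)^2f^{(1)}(x)}=\frac{f(x)}{3f^{(1)}(x)}\to0$ by the previous paragraph, so Stolz–Cesàro for functions gives $\int_0^x f^3 / f(x)^3\to 0$ (one must first check $f(x)^3\to\infty$, which is clear). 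Feeding all this back, the displayed equation forces $(2p-1)\cdot\lim f^{(1)}/f^2 = 2p-1$, hence $\lim_{x\to1^-} f^{(1)}(x)/f(x)^2 = 1$.

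Finally, to get $\lim_{x\to1^-} f(x)(1-x)=1$, I would turn the asymptotic relation $f^{(1)}\sim f^2$ into a statement about $1/f$: write $\bigl(\tfrac{1}{f}\bigr)^{(1)} = -f^{(1)}/f^2 \to -1$ as $x\to1^-$. Since $1/f(x)\to 0$ as $x\to1^-$ (because $f\to+\infty$), integrating the derivative from $x$ to a point approaching $1$ and applying the fundamental theorem of calculus (or simply L'Hôpital on $\tfrac{1/f(x)}{1-x}$) gives $\frac{1/f(x)}{1-x}\to 1$, i.e. $f(x)(1-x)\to1$. The main obstacle I anticipate is the rigorous control of the integral term $\frac1{f(x)^3}\int_0^x f(t)^3\,dt$: one needs to avoid circularity, since the cleanest bound on that term uses boundedness of $f^{(1)}/f^2$, which is part of what \mref{Eqf Eq} is being used to prove — so I would first establish, from \mref{Eqf Eq} alone together with $f\to\infty$ and $f/f^{(1)}\to0$, that $\limsup f^{(1)}/f^2<\infty$ and $\liminf f^{(1)}/f^2>0$ (a soft two-sided bound), then use that to make the Stolz–Cesàro step for the integral rigorous, and only then conclude the exact limit $1$.
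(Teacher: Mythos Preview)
Your argument has one genuine gap: the claim that $f$ is convex. You justify it by ``$F^{(2)}=4g_{2\bar 2}>0$, so $f^{(1)}$ is nondecreasing'', but $F^{(2)}=f^{(1)}>0$ only says $f$ is \emph{increasing} (equivalently, $F$ is convex); convexity of $f$ would require $f^{(2)}=F^{(3)}\ge 0$, which is nowhere established. The companion claim $f^{(1)}\to+\infty$ has the same defect: Proposition~\ref{Asymptf0} gives only that $f^{(1)}$ is unbounded, and without monotonicity this does not yield a limit. Since your L'H\^opital step for the integral term reduces $\int_0^x f^3/f(x)^3$ to $f/(3f^{(1)})$, the whole treatment of that term rests on this unproved convexity, and the ``soft two-sided bound'' you sketch at the end still presupposes $f/f^{(1)}\to 0$.

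The paper closes this gap by applying Lemma~\ref{LemCvx} not to $f$ but directly to $\tilde f(x):=\int_0^x f(t)^3\,dt$. This function \emph{is} convex on $(0,1)$ because $\tilde f^{(2)}=3f^2f^{(1)}>0$ (using only $f>0$ on $(0,1)$ and $f^{(1)}=F^{(2)}>0$, both of which are known), and $\tilde f^{(1)}=f^3\to+\infty$; Lemma~\ref{LemCvx} then gives $\int_0^x f^3/f(x)^3=\tilde f/\tilde f^{(1)}\to 0$ immediately, with no circularity. With that term controlled, your divided equation yields $f^{(1)}/f^2\to 1$ as you say, and your final step---integrating $(1/f)'=-f^{(1)}/f^2\to -1$ from $x$ to $1$ using $1/f(1^-)=0$---correctly gives $f(x)(1-x)\to 1$. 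The paper orders things slightly differently (it integrates the divided equation from $x$ to $1$ to obtain $f(x)(1-x)\to 1$ first, then reads off $f^{(1)}/f^2\to 1$), but once the integral term is handled via $\tilde f$, your route is a perfectly valid variant.
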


\begin{proof}[Proof of Proposition \ref{Asymptf1}]
Let $x> 0$. Since $f(0)=0$ and $f$ is increasing on $[0,1[$, we have $f(x)> 0$. We divide Equation \mref{Eqf Eq} both sides by $f(x)$ to obtain the following: 
\begin{equation}
\label{Divf3Eq}
\left((2p-1)x+\frac{4pK}{f(x)}\right)\frac{f^{(1)}(x)}{f(x)^2}=(2p-1)x
+\frac{6pK+1}{f(x)}-2(p+1)\frac{\displaystyle \int_0^x f(t)^3 \;dt}{f(x)^3}+\frac{4e^{3F(0)}}{f(x)^3}.
\end{equation}
Let us prove that $\displaystyle \lim_{x\to 1^-} \frac{\displaystyle\int_0^x f(t)^3 \;dt}{f(x)^3} =0$. Define $\tilde{f}(x):=\displaystyle \int_0^x f(t)^3\; dt$ for $x\in [0,1[$. Then $\tilde{f} \in \mathcal{C}^1\left(]0,1[\right)$, is convex and satisfies $\displaystyle \lim_{x \to 1^-}\tilde{f}^{(1)}(x)= +\infty$. We apply Lemma \ref{LemCvx} to $\tilde{f}$ to deduce that $\displaystyle \lim_{x \to 1^-}\frac{\tilde{f}}{\tilde{f}^{(1)}}(x)=0$.
\\Define $\displaystyle b(x):=\frac{6pK+1}{f(x)}-2(p+1)\frac{\displaystyle \int_0^x f(t)^3 \;dt}{f(x)^3}+\frac{4e^{3F(0)}}{f(x)^3}$ for $x \in [0,1]$. Then $b\in \mathcal{C}\left([0,1]\right)$ and $\displaystyle \lim_{x\to 1^-} b(x)=0$. Hence $B:= \displaystyle \int_\cdot ^1b(t)\; dt$ is well defined and $B\in \mathcal{C}^1\left([0,1]\right)$. Let $x\in ]0,1[$. We integrate Equation \mref{Divf3Eq} between $x$ and $1$ to obtain: 
\begin{align*}
\displaystyle (2p-1)\frac{x}{f(x)}+(2p-1)\int_x^1\frac{dt}{f(t)}+\frac{2pK}{f(x)^2}&=\displaystyle \int_x^1\left((2p-1)t+\frac{4pK}{f(t)}\right)\frac{f^{(1)}}{f^2}(t)\; dt,
\\&= \frac{2p-1}{2}(1-x^2) +B(x),
\end{align*}
\begin{align}
\label{IntTempEq}
(2p-1)\left(1+\frac{2pK}{f(x)}\right)\frac{x}{f(x)(1-x)}+(2p-1)\frac{\displaystyle \int_x^1\frac{dt}{f(t)}}{1-x}&=\frac{2p-1}{2}(1+x)+\frac{B(x)}{1-x}.
\end{align}
Note that $\displaystyle \int_\cdot^1 \frac{dt}{f(t)}$ is the primitive of the function $\displaystyle \frac{1}{f}\in \mathcal{C}\left(]0,1]\right)$, so that $\displaystyle \lim_{x\to 1^-} \frac{\displaystyle \int_x^1\frac{dt}{f(t)}}{1-x}=0$. Likewise by construction of $B$ we have $\displaystyle \lim_{x\to 1^-}\frac{B(x)}{1-x}=0$. We let $x$ tend to $1^-$ in Equation \mref{IntTempEq} to deduce $\displaystyle\lim_{x\to 1^-} \frac{2p-1}{f(x)(1-x)}=\lim_{x\to 1^-} (2p-1)\left(1+\frac{2pK}{f(x)}\right)\frac{x}{f(x)(1-x)}=2p-1$, hence $\displaystyle\lim_{x\to 1^-} f(x)(1-x)=1$.
\end{proof}

Proposition \ref{Asymptf1} directly gives the asymptotic behaviour of $Z$, and also an asymptotic expansion of $F$: 
\begin{cor}
\label{AsymptZ}
We have: $\displaystyle \lim_{x \to 1^-}(1-x)^3Z(x)=\frac{2p-1}{4}$, and $\displaystyle F(x)=Log\left(\frac{1}{1-x}\right)+\frac{Log\left(\frac{2p-1}{4}\right)}{3}+\displaystyle \underset{x\to 1^-}{o\left(1\right)}$.
\end{cor}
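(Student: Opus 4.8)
The plan is to read the two asymptotics directly off Proposition~\ref{Asymptf1} and Equation~\mref{EqZ}, with no new estimates needed. First I would compute $\lim_{x\to 1^-}(1-x)^3 Z(x)$. Recall $Z(x)=\tfrac14\bigl(f^{(1)}(x)\bigl((2p-1)xf(x)+4pK\bigr)-f(x)^2\bigr)$. I would multiply through by $(1-x)^3$ and distribute the factors of $(1-x)$ onto the individual pieces: write $(1-x)^3 Z(x)=\tfrac14\Bigl[(2p-1)x\cdot\bigl((1-x)^2 f^{(1)}(x)\bigr)\cdot\bigl((1-x)f(x)\bigr)+4pK(1-x)^2\cdot\bigl((1-x)f^{(1)}(x)\bigr)-(1-x)\cdot\bigl((1-x)f(x)\bigr)^2\Bigr]$. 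By Proposition~\ref{Asymptf1} we have $(1-x)f(x)\to 1$, and combining $\tfrac{f^{(1)}}{f^2}\to 1$ with $(1-x)f(x)\to 1$ gives $(1-x)^2 f^{(1)}(x)=\bigl((1-x)f(x)\bigr)^2\cdot\tfrac{f^{(1)}(x)}{f(x)^2}\to 1$; in particular $(1-x)f^{(1)}(x)=\tfrac{1}{1-x}\cdot(1-x)^2f^{(1)}(x)\to+\infty$ but it is multiplied by $(1-x)^2\to 0$, and more precisely $(1-x)^2\cdot(1-x)f^{(1)}(x)=(1-x)\cdot(1-x)^2 f^{(1)}(x)\to 0$. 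Similarly the last term has a factor $(1-x)\to 0$ times a bounded quantity. Hence only the first bracketed term survives, with limit $(2p-1)\cdot 1\cdot 1\cdot 1=2p-1$, so $\lim_{x\to 1^-}(1-x)^3 Z(x)=\tfrac{2p-1}{4}$.

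Next, for the expansion of $F$, I would take logarithms in Equation~\mref{EqZ}: since $Z=e^{3F}$ on $]-1,1[$ and $Z>0$ near $1^-$ (it equals $e^{3F}$), we get $3F(x)=\log Z(x)=\log\bigl((1-x)^3 Z(x)\bigr)-3\log(1-x)=\log\bigl((1-x)^3 Z(x)\bigr)+3\log\tfrac{1}{1-x}$. By the limit just established, $\log\bigl((1-x)^3 Z(x)\bigr)\to\log\tfrac{2p-1}{4}$, so $\log\bigl((1-x)^3 Z(x)\bigr)=\log\tfrac{2p-1}{4}+o(1)$ as $x\to 1^-$. Dividing by $3$ yields $F(x)=\log\tfrac{1}{1-x}+\tfrac13\log\tfrac{2p-1}{4}+o(1)$, which is exactly the claimed asymptotic expansion.

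There is no serious obstacle here: the corollary is a formal consequence of Proposition~\ref{Asymptf1} and the relation $Z=e^{3F}$. The only point requiring a little care is bookkeeping the powers of $(1-x)$ attached to each monomial in $Z$ so that the $4pK$-term and the $f(x)^2$-term are seen to vanish in the limit while the leading term $(2p-1)xf^{(1)}(x)f(x)$ contributes; writing each factor as a product of quantities with known limits (as above) makes this transparent. One should also note that $2p-1\ge 1>0$ so the logarithm $\log\tfrac{2p-1}{4}$ is well defined (it may be negative, e.g.\ for $p=1$, which is fine).
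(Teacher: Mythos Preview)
Your argument is correct and follows essentially the same route as the paper: both derive the limit of $(1-x)^3Z(x)$ directly from the definition of $Z$ together with Proposition~\ref{Asymptf1}, and then obtain the expansion of $F$ from the identity $Z=e^{3F}$. The paper is simply terser, stating that the first limit follows ``directly'' without writing out the term-by-term bookkeeping you have made explicit.
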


\begin{proof}[Proof of Corollary \ref{AsymptZ}]
Proposition \ref{Asymptf1} and the definition of the function $Z$ directly gives the first result. We apply formula \mref{Det Z Eq} to deduce that $\displaystyle \lim_{x\to 1^-}\left((1-x)e^{F(x)}\right)^3=\frac{2p-1}{4}$, hence $\displaystyle \lim_{x\to 1^-} F(x)-Log\left(\frac{1}{1-x}\right)=\frac{Log\left(\frac{2p-1}{4}\right)}{3}$, hence the second result.
\end{proof}

We directly deduce from Corollary \ref{AsymptZ} the asymptotic behavior of the potential $g$, or equivalently its volume form (see Proposition \ref{Thm1}). In order to estimate the curvatures of the Kähler-Einstein metric, we also need the asymptotic behavior of the derivatives of higher order of $F$ at $x=1^-$. We have the following: 

\begin{prop}
\label{Asymptf2}
For every integer $k\in \mathbb{N}$, one has: 
\[ \lim_{x\to 1^-} (1-x)^{k+3}Z^{(k)}(x)=\frac{2p-1}{8}(k+2)!\; \text{and} \; \lim_{x\to 1^-}f^{(k)}(x)(1-x)^{k+1}=\lim_{x\to 1^-}\frac{f^{(k)}}{f^{k+1}}(x)=k! \quad .\]
\end{prop}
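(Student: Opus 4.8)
The plan is to rewrite the two families of limits as the single pair of statements
\[
(1-x)^{k+1}f^{(k)}(x)\underset{x\to 1^-}{\longrightarrow}k!\qquad\text{and}\qquad(1-x)^{k+3}Z^{(k)}(x)\underset{x\to 1^-}{\longrightarrow}\tfrac{2p-1}{8}(k+2)!\,,
\]
and to prove them simultaneously by strong induction on $k$. The cases $k=0$ are exactly Proposition~\ref{Asymptf1} (for $f$) and Corollary~\ref{AsymptZ} (for $Z$, since $\tfrac{2p-1}{8}\cdot 2!=\tfrac{2p-1}{4}$), so they serve as the base. All functions involved are $\mathcal{C}^\infty$ on $[0,1[$ because $F\in\mathcal{C}^\omega(]-1,1[)$; moreover $f\ge 0$ on $[0,1[$ (as $f=F^{(1)}$ with $F$ even and convex), so $(2p-1)xf+4pK\ge 4pK>0$ there and every division performed below is legitimate.

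The two relations I would use repeatedly are $Z^{(1)}=3fZ$ — which follows from $Z=e^{3F}$ (Equation~\ref{EqZ}) — and $\bigl((2p-1)xf+4pK\bigr)f^{(1)}=4Z+f^2$ — which is just the definition of $Z$ in Proposition~\ref{Thm1}. Assume both limits above hold for all orders $\le k$. Differentiating the first relation $k$ times gives $Z^{(k+1)}=3\sum_{j=0}^{k}\binom{k}{j}f^{(j)}Z^{(k-j)}$; multiplying the $j$-th term by $(1-x)^{k+4}=(1-x)^{1+j}(1-x)^{3+(k-j)}$ and invoking the induction hypothesis shows
\[
(1-x)^{k+4}Z^{(k+1)}(x)\underset{x\to 1^-}{\longrightarrow}\frac{2p-1}{8}\cdot 3\sum_{j=0}^{k}\binom{k}{j}j!\,(k-j+2)!\,,
\]
and rewriting $\binom{k}{j}j!=k!/(k-j)!$ and using the hockey-stick identity one gets $3\sum_{j=0}^{k}\binom{k}{j}j!(k-j+2)!=(k+3)!$, which is the claimed value $\tfrac{2p-1}{8}(k+3)!$.

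For the order-$(k+1)$ statement on $f$, differentiate $\bigl((2p-1)xf+4pK\bigr)f^{(1)}=4Z+f^2$ exactly $k$ times and isolate the top-order term:
\[
\bigl((2p-1)xf+4pK\bigr)f^{(k+1)}=4Z^{(k)}+(f^2)^{(k)}-\sum_{j=1}^{k}\binom{k}{j}(2p-1)\bigl(xf^{(j)}+jf^{(j-1)}\bigr)f^{(k+1-j)}\,,
\]
where I used $\bigl((2p-1)xf+4pK\bigr)^{(j)}=(2p-1)(xf^{(j)}+jf^{(j-1)})$ for $j\ge 1$. Every factor on the right is covered by the induction hypothesis. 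After multiplying through by $(1-x)^{k+3}$ one checks that $(f^2)^{(k)}$ and every term containing a factor $f^{(j-1)}$ carry one extra power of $(1-x)$ and hence drop out, while $4Z^{(k)}$ contributes $\tfrac{2p-1}{2}(k+2)!$ and the terms $(2p-1)xf^{(j)}f^{(k+1-j)}$ contribute $-(2p-1)k!\,\tfrac{k(k+1)}{2}$; the elementary identity $(k+2)!-k(k+1)k!=2(k+1)!$ collapses the sum to $(2p-1)(k+1)!$. Dividing by $(1-x)\bigl((2p-1)xf+4pK\bigr)$, whose limit is $2p-1$ by the case $k=0$ for $f$, yields $(1-x)^{k+2}f^{(k+1)}(x)\to(k+1)!$, which closes the induction.

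The step I expect to be the main obstacle is purely bookkeeping: expanding the Leibniz sums and checking that exactly the terms listed above survive the normalization by the right power of $(1-x)$ — the crucial point being that a derivative landing on a factor $x$ (rather than on $f$ or $Z$) costs one power of $(1-x)$, so only the ``$xf^{(j)}$'' parts matter. Beyond this, the only genuinely computational inputs are the two closed-form coefficient identities $3\sum_{j=0}^{k}\binom{k}{j}j!(k-j+2)!=(k+3)!$ and $(k+2)!-k(k+1)k!=2(k+1)!$, both of which are elementary.
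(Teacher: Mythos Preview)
Your argument is correct and follows the same strategy as the paper: induction on $k$, using $Z'=3fZ$ together with Leibniz for the $Z^{(k+1)}$ step, and the defining relation $4Z=\bigl((2p-1)xf+4pK\bigr)f^{(1)}-f^2$ differentiated $k$ times for the $f^{(k+1)}$ step. The only cosmetic difference is that the paper expands the product $ff^{(1)}$ by Leibniz (absorbing the factor $(2p-1)x$ into the limit directly), whereas you keep the full factor $(2p-1)xf+4pK$ and apply Leibniz to the product $A\cdot f^{(1)}$; the surviving terms and the final arithmetic identities are identical.
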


\begin{proof}[Proof of Proposition \ref{Asymptf2}]
The fact that $\displaystyle \lim_{x\to 1^-}\frac{f^{(k)}}{f^{k+1}}(x)=k!$ directly follows from Proposition \ref{Asymptf1} and $\displaystyle \lim_{x\to 1^-}f^{(k)}(x)(1-x)^{k+1}=k!$. Let us prove the other assertions by induction. Proposition \ref{Asymptf1} and Corollary \ref{AsymptZ} ensure that the formulas are true for $k=0$. Let $k\geq 0$ be an integer and assume that the formulas are true for any integer $ 0\leq l \leq k$. We differentiate Equation \mref{EqZ} $(k+1)$ times to obtain  $Z^{(k+1)}=\left(Z^{(1)}\right)^{(k)}=3\sum_{l=0}^k \binom{k}{l}f^{(l)}Z^{(k-l)}$, hence the following:
\begin{align*}
\\ \lim_{x\to 1^-}(1-x)^{k+4}Z^{(k+1)}(x)&=3\sum_{l=0}^k \binom{k}{l}\lim_{x\to 1^-}\left((1-x)^{l+1}f^{(l)}(x)\right)\lim_{x\to 1^-}\left((1-x)^{k-l+3}Z^{(k-l)}(x)\right),
\\& =3\frac{2p-1}{8}\sum_{l=0}^k \binom{k}{l}l!(k+2-l)!,
\\&=3\frac{2p-1}{8}k!\sum_{l=0}^k (k+2-l)(k+1-l),
\\&=3\frac{2p-1}{8}k!\sum_{l=1}^{k+1} l(l+1)=\frac{2p-1}{8}(k+3)!.
\end{align*}
We differentiate Equation \mref{EqZ} $k$ times to obtain: 
\begin{align*}
\frac{(k+2)!}{2}=\lim_{x\to 1^-}4(1-x)^{k+3}\frac{Z^{(k)}}{2p-1}(x)&=\lim_{x\to 1^-}\sum_{l=0}^{k}\binom{k}{l} \left((1-x)^{l+2}f^{(l+1)}(x)\right)\left((1-x)^{k+1-l}f^{(k-l)}(x)\right),
\\&=\sum_{l=0}^{k-1} \binom{k}{l}(l+1)!(k-l)!+\lim_{x\to 1^-}\left((1-x)^{k+2}f^{(k+1)}(x)\right),
\\&=k!\sum_{l=0}^{k-1} (l+1)+\lim_{x\to 1^-}\left((1-x)^{k+2}f^{(k+1)}(x)\right),
\\&=\frac{k(k+1)!}{2}+\lim_{x\to 1^-}\left((1-x)^{k+2}f^{(k+1)}(x)\right),
\end{align*}
\begin{align*}
\lim_{x\to 1^-}\left((1-x)^{k+2}f^{(k+1)}(x)\right)=\frac{(k+2)!}{2}-\frac{k(k+1)!}{2}=(k+1)!,
\end{align*}
hence the result.
\end{proof}

\begin{rem}
\begin{itemize}[leftmargin=*]
\item We conjecture that $e^{-F}$ satisfies the following:
\begin{equation}
\label{ConjEq}
\exists (\eta_k)_{k\in \mathbb{N}}\in \mathcal{C}^\infty\left([0,1]\right)^\mathbb{N},\quad e^{-F(x)}\underset{x \to 1^-}{\sim}(1-x)\sum_{k=0}^{+\infty} \eta_k\left((1-x)^3Log(1-x)\right)^k,
\end{equation}
with $\displaystyle \lim_{x\to 1^-}\eta_1(x)\neq 0$ except for $p=1$. Especially, apart from the case of the ball ($p=1$), one would have $e^{-F}\notin \mathcal{C}^4\left([0,1]\right)$ so that the regularity given in Proposition \ref{Basicsf} is almost optimal.
\\Conjecture \mref{ConjEq} is motivated by results of J.Lee and R.Melrose (see \cite{LMbebo}) and of R. Graham (see \cite{Gra1}) in the case of smooth strictly pseudoconvex domains, and by J.Kamimoto  (see \cite{Kam1}) in the case of the Bergman metric in tube domains.
\item In a forthcoming note, we will prove that in the case of the Thullen domains $\{\left \lvert z_1 \right \rvert ^2 + \left \lvert z_2 \right \rvert ^{2p}<1\}\subset \mathbb{C}^2$, there exists a positive function $\eta \in \mathcal{C}^\infty\left([0,1]\right)$ such that for every $x\in [0,1]$ we have $ e^{-g(0,x)} =(1-x)\eta(x)$ (so that in the sense of conjecture \mref{ConjEq} all the functions $\eta_k's$ are equal to $0$ for every integer $k\geq 1$), and that if we denote by $K$ the Bergman kernel of $\{\left \lvert z_1 \right \rvert ^2 + \left \lvert z_2 \right \rvert ^{2p}<1\}$ we have $x\mapsto K(0,x)e^{-3g(0,x)} \in \mathcal{C}^\infty \left([-1,1]\right)$, and deduce from this a comparison of the Kähler-Einstein metric and its curvatures to the Bergman metric and its curvatures, globally on the Thullen domain.
\end{itemize}
\end{rem}

\section{Curvatures estimates}
In this Section we use the analysis of the function $f$ obtained in Section 2 to get estimates of the holomorphic bisectional curvatures of the Kähler-Einstein metric.

\subsection{General properties of the holomorphic bisectional curvatures in $T_p$}

We wish to estimate the holomorphic (bi)sectional curvatures of the Kähler-Einstein metric $g$. If $v,w \in \mathbb{C}^2\setminus\{0\}$, and $z \in T_p$, we denote by $Bis_z(v,w)$ the holomorphic bisectional curvature of the metric $g$ at point $z$ between the vectors $v$ and $w$, and we denote by $ S_z(v)=Bis_z(v,v)$ the holomorphic sectional curvature of $g$ at point $z$ and at vector $v$. We shall omit the point at which we compute it and use the notations $Bis(v,w), S(v)$ to simplify the notations when possible. 

Recall that the holomorphic bisectional curvature of $g$ between non zero vectors $v$ and $w$ is given by:

\begin{equation}
\label{BisEq}
Bis(v,w)=\displaystyle\frac{\displaystyle \sum_{1\leq i,j,k,l \leq 2}R_{i \bar{j} k \bar{l}}v_i \bar{v_j}w_k \bar{w_l}}{\left(\sum_{1\leq i,j\leq 2}g_{i\bar{j}}v_i\overline{v_j}\right)\left(\sum_{1\leq i,j\leq 2}g_{i\bar{j}}w_i\overline{w_j}\right)},
\end{equation}
and that the curvature coefficients satisfy the following:
\begin{equation}
\label{CurvCoefEq}
\forall 1\leq i,j,k,l\leq 2, \quad R_{i \bar{j} k \bar{l}}=-g_{i \bar{j} k \bar{l}}+\sum_{1\leq \alpha,\beta \leq 2}g_{i k \bar{\alpha}}g^{\bar{\alpha}\beta}g_{\beta \bar{j}\bar{l}},
\end{equation}
where $\left[ g^{\bar{\alpha}\beta}\right]:=\left[g_{i \bar{j}}\right]^{-1}$. Recall that the holomorphic bisectional curvature of a metric does not depend on the length of the vectors at which it is computed, namely it satisfies the following:

\begin{equation}
\label{InvResc}
\forall v,w \in \mathbb{C}^2\setminus\{0\},\quad Bis(v,w)=Bis\left(\frac{v}{\lvert v \rvert },\frac{w}{\lvert w \rvert } \right).
\end{equation}

Moreover, since the Kähler-Einstein metric is invariant under the action of $Aut(T_p)$, its holomorphic bisectional curvatures satisfies the following transformation formula:

\begin{equation}\label{TransForm}
\forall z\in T_p,\quad \forall \psi\in Aut(T_p),\quad \forall v,w \in \mathbb{C}^2\setminus\{0\},\quad Bis_{\psi(z)}(\partial \psi_z(v),\partial \psi_z(w))=Bis_z(v,w).
\end{equation}

The following formula is specific to the tube domains: 
\begin{prop}\label{CurvFormTub}
If $v=(v_1,v_2)$, $w=(w_1,w_2)$, $\alpha$ is an argument of $v_1\overline{v_2}$ and $\beta$ is an argument of $w_1\overline{w_2}$, then the following holds: 
\begin{equation}\label{CurvFormTubEq}
\begin{array}{clll}
\left(\sum_{1\leq i,j\leq 2}g_{i\bar{j}}v_i\overline{v_j}\right)\left(\sum_{1\leq i,j\leq 2}g_{i\bar{j}}w_i\overline{w_j}\right)Bis(v,w)&=R_{1\bar{1}1\bar{1}}|v_1|^2|w_1|^2
\\&+2R_{1\bar{1}1\bar{2}}|v_1||w_1|(|v_1||w_2|cos(\beta)+|v_2||w_1|cos(\alpha))
\\&+R_{1\bar{1}2\bar{2}}(|v_1|^2|w_2|^2+|v_2|^2|w_1|^2+2|v_1||v_2||w_1||w_2|cos(\alpha-\beta))
\\&+2R_{1\bar{2}1\bar{2}}|v_1||v_2||w_1||w_2|cos(\alpha+\beta)
\\&+2R_{1\bar{2}2\bar{2}}|v_2||w_2|(|v_1||w_2|cos(\alpha)+|v_2||w_1|cos(\beta))
\\&+R_{2\bar{2}2\bar{2}}|v_2|^2|w_2|^2.
\end{array}
\end{equation}
\end{prop}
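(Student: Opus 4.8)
The plan is to expand the numerator of \mref{BisEq} by hand, namely $N:=\sum_{1\le i,j,k,l\le 2}R_{i\bar j k\bar l}v_i\overline{v_j}w_k\overline{w_l}$, regrouping the sixteen monomials $v_i\overline{v_j}w_k\overline{w_l}$ according to the symmetries of the Kähler curvature tensor, and to exploit one fact that is special to tube domains: every curvature coefficient $R_{i\bar j k\bar l}$ is a real number on $T_p$.

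I would prove the reality statement first. Because $T_p$ is invariant under the translations $\tau_u$, $u\in\mathbb{R}^2$, identity \mref{JacEq} applied to $\psi=\tau_u$ (whose complex Jacobian is the identity, so $\mathrm{Log}|\mathrm{Det}\,Jac_\mathbb{C}(\tau_u)|=0$) gives $g=g\circ\tau_u$; hence $g$ depends only on $Re(z_1)$ and $Re(z_2)$, which is also immediate from Proposition \ref{Form}. Consequently every mixed derivative of $g$ is, up to a real constant, a partial derivative of a function of $(x_1,x_2)$ alone, hence is real-valued; in particular $g_{i\bar j}$, $g_{ik\bar\alpha}$, $g_{\beta\bar j\bar l}$ and $g_{i\bar j k\bar l}$ are real, so the entries of $[g^{\bar\alpha\beta}]=[g_{i\bar j}]^{-1}$ are real as well, and by \mref{CurvCoefEq} every $R_{i\bar j k\bar l}$ is real on $T_p$.

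Next I would use the standard symmetries of the Kähler curvature tensor, $R_{i\bar j k\bar l}=R_{k\bar j i\bar l}=R_{i\bar l k\bar j}$ together with $\overline{R_{i\bar j k\bar l}}=R_{j\bar i l\bar k}$, combined with the reality just proved, to reduce the sixteen coefficients to the six quantities $R_{1\bar1 1\bar1}$, $R_{1\bar1 1\bar2}$, $R_{1\bar1 2\bar2}$, $R_{1\bar2 1\bar2}$, $R_{1\bar2 2\bar2}$, $R_{2\bar2 2\bar2}$ appearing in the statement; correspondingly the sixteen monomials split into six groups of respective sizes $1,4,4,2,4,1$. Summing the monomials of each group and using $v_1\overline{v_2}+\overline{v_1}v_2=2Re(v_1\overline{v_2})=2|v_1||v_2|\cos\alpha$ and $v_1\overline{v_2}w_1\overline{w_2}+\overline{v_1\overline{v_2}w_1\overline{w_2}}=2|v_1||v_2||w_1||w_2|\cos(\alpha+\beta)$, together with the analogue producing $\cos(\alpha-\beta)$ and the identities obtained by interchanging the indices $1$ and $2$ or the roles of $v$ and $w$, one recognizes $N$ as exactly the right-hand side of \mref{CurvFormTubEq} (the terms with a factor $|v_1||v_2|$ or $|w_1||w_2|$ vanish when an argument is undefined, so the choice of $\alpha,\beta$ is immaterial there). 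Dividing by $\bigl(\sum_{i,j}g_{i\bar j}v_i\overline{v_j}\bigr)\bigl(\sum_{i,j}g_{i\bar j}w_i\overline{w_j}\bigr)$ via \mref{BisEq} yields the asserted formula.

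The computation is essentially bookkeeping; the only step that requires genuine input is the reality of the coefficients $R_{i\bar j k\bar l}$, which is precisely why I would isolate and dispatch it at the outset. Once that is established, the term-by-term regrouping and the trigonometric rewriting are mechanical.
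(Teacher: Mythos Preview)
Your proposal is correct and follows essentially the same route as the paper: use that all curvature coefficients are real, exploit the symmetries $R_{i\bar j k\bar l}=R_{k\bar j i\bar l}=R_{j\bar i l\bar k}$ to reduce the sixteen terms to six groups, and rewrite each group trigonometrically via $\alpha,\beta$. The only difference is cosmetic: you spell out the reason for the reality of $R_{i\bar j k\bar l}$ (translation invariance, hence $g$ depends only on $(x_1,x_2)$), whereas the paper simply states it as a fact before carrying out the same regrouping.
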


\begin{proof}[Proof of Proposition \ref{CurvFormTub}]
From the expression of the curvature coefficients \mref{CurvCoefEq} and the fact that $g$ and all its complex derivatives are real numbers, we have for $1\leq i,j,k,l\leq 2$: $R_{i \bar{j} k \bar{l}}=R_{ k \bar{j}i \bar{l}}=R_{j \bar{i}  l \bar{k}}$. Hence we may simplify formula \mref{BisEq} by gathering the terms depending on the number of $2$ occuring in the 4-uple $(i,j,k,l)$: 

\begin{align*}
\left(\sum_{1\leq i,j\leq 2}g_{i\bar{j}}v_i\overline{v_j}\right)\left(\sum_{1\leq i,j\leq 2}g_{i\bar{j}}w_i\overline{w_j}\right)Bis(v,w)&=R_{1 \bar{1}1 \bar{1}}|v_1|^2|w_1|^2
\\ &+R_{1 \bar{1} 1 \bar{2}}\left(|v_1|^2(w_1 \overline{w_2}+\overline{w_1}w_2 )+(v_1 \overline{v_2}+\overline{v_1}v_2 )|w_1|^2\right)
\\ &+R_{1 \bar{1} 2 \bar{2}}\left(|v_1|^2|w_2|^2+|v_2|^2|w_1|^2+v_1\overline{v_2}\overline{w_1}w_2+\overline{v_1}v_2w_1\overline{w_2}\right)
\\ &+R_{1 \bar{2} 1 \bar{2}}\left(v_1\overline{v_2}w_1\overline{w_2}+\overline{v_1}v_2\overline{w_1}w_2\right)
\\ &+R_{1 \bar{2} 2 \bar{2}}\left((v_1 \overline{v_2}+\overline{v_1}v_2) |w_2|^2+ |v_2|^2(w_1 \overline{w_2}+\overline{w_1}w_2)\right)
\\ &+R_{2 \bar{2} 2 \bar{2}}|v_2|^2|w_2|^2,
\\
\\&=R_{1\bar{1}1\bar{1}}|v_1|^2|w_1|^2
\\&+2R_{1\bar{1}1\bar{2}}|v_1||w_1|(|v_1||w_2|cos(\beta)+|v_2||w_1|cos(\alpha))
\\&+R_{1\bar{1}2\bar{2}}(|v_1|^2|w_2|^2+|v_2|^2|w_1|^2+2|v_1||v_2||w_1||w_2|cos(\alpha-\beta))
\\&+2R_{1\bar{2}1\bar{2}}|v_1||v_2||w_1||w_2|cos(\alpha+\beta)
\\&+2R_{1\bar{2}2\bar{2}}|v_2||w_2|(|v_1||w_2|cos(\alpha)+|v_2||w_1|cos(\beta))
\\&+R_{2\bar{2}2\bar{2}}|v_2|^2|w_2|^2.
\end{align*}
\end{proof}

Observe that Proposition \ref{TransForm} reduces the study of the behavior of the curvatures on $T_p$ to the study of the behavior of the curvatures on the subset $\{0\}\times [-1,1]$.

\subsection{Holomorphic bisectional curvatures when $X \longrightarrow 0$}
First we compute the curvature coefficients at the origin in the following: 
\begin{prop}\label{CoefOri}
The curvature coefficients satisfy the following at the origin: 
\begin{align*}
&R_{1 \bar{1}1 \bar{1}}=-32p^3K,
\\&R_{1 \bar{2}1\bar{2}}=(p-1)f^{(1)}(0),
\\&R_{1 \bar{1} 2 \bar{2}}=-pf^{(1)}(0),
\\&R_{2 \bar{2}2 \bar{2}}=\left(-3+\frac{1}{K}\right)\frac{f^{(1)}(0)^2}{16},
\end{align*}
all the other coefficients being equal to $0$.
\end{prop}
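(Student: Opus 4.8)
The plan is to evaluate everything in Proposition~\ref{CoefOri} at the point $z=0$, where $X(0)=0$, $\pi_1^\mathbb{R}(0)=0$, and $f(0)=F^{(1)}(0)=0$ (the latter because $F$ is even, by Proposition~\ref{Basicsf}). First I would specialize the matrix $\left[g_{i\bar{j}}\right]$ from formula~\mref{FormeExplicite} at $z=0$: the off-diagonal entry vanishes since $Xf^{(1)}\circ X+f\circ X$ is zero there, so the metric is diagonal at the origin with $g_{1\bar{1}}(0)=4pK$ and $g_{2\bar{2}}(0)=\frac{f^{(1)}(0)}{4}$, and hence $g^{\bar{1}1}(0)=\frac{1}{4pK}$, $g^{\bar{2}2}(0)=\frac{4}{f^{(1)}(0)}$ with the off-diagonal inverse entries zero. (Note $f^{(1)}(0)=F^{(2)}(0)>0$ by strict convexity, so this is legitimate.) This diagonalization is what makes the second term $\sum_{\alpha,\beta}g_{ik\bar\alpha}g^{\bar\alpha\beta}g_{\beta\bar j\bar l}$ in~\mref{CurvCoefEq} collapse to $\frac{g_{ik\bar1}g_{1\bar j\bar l}}{4pK}+\frac{4\,g_{ik\bar2}g_{2\bar j\bar l}}{f^{(1)}(0)}$.

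Next I would compute the needed third and fourth complex derivatives of $g$ at the origin. The cleanest route is to differentiate the potential $g=F\circ X+\frac{K}{p}\mathrm{Log}\!\left(\frac{1}{1-4p\pi_1^\mathbb{R}}\right)$ directly, using the expressions for $[X_i]$, $[X_iX_{\bar j}]$, $[X_{i\bar j}]$ already recorded in the proof of Proposition~\ref{Thm1}, together with higher $X$-derivatives obtained by further differentiating $X=\pi_2^\mathbb{R}(1-4p\pi_1^\mathbb{R})^{-1/2p}$. Since $\pi_1^\mathbb{R}$ and $\pi_2^\mathbb{R}$ are linear (pluriharmonic), all complex derivatives of $X$ of total order $\geq 2$ come only from hitting the power $(1-4p\pi_1^\mathbb{R})^{-1/2p}$, so they are elementary rational expressions in $\pi_1^\mathbb{R},\pi_2^\mathbb{R}$; evaluated at $0$ they are pure numbers. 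Then $g_{ik\bar j}$ and $g_{i\bar jk\bar l}$ at $0$ are polynomial combinations of $f(0)=0$, $f^{(1)}(0)$, $f^{(2)}(0)$, $K$, $p$, and these $X$-derivative values. The term $f^{(2)}(0)$ should drop out of every curvature coefficient (consistent with the stated answers depending only on $f^{(1)}(0)$): I expect this because it enters $g_{i\bar jk\bar l}$ and the quadratic correction term symmetrically, and I would track that cancellation carefully. Plugging into~\mref{CurvCoefEq} and simplifying, using $K=\frac{2p+1}{3}$ where it helps (e.g. $-3+\frac1K=\frac{1-3K}{K}=\frac{-2p}{K}$, matching the expected $R_{2\bar2 2\bar2}$), gives the four displayed formulas; all coefficients with an odd number of barred-vs-unbarred index mismatches, or involving the vanishing off-diagonal metric entry in an essential way, come out zero.

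The main obstacle is purely computational bookkeeping: organizing the fourth-order derivative $g_{i\bar jk\bar l}(0)$ for the few index patterns that matter ($(1\bar11\bar1)$, $(1\bar21\bar2)$, $(1\bar12\bar2)$, $(2\bar22\bar2)$) and making sure the Leibniz expansion of derivatives of $F\circ X$ is handled correctly — in particular keeping straight which monomials in $X_i,X_{\bar j},X_{i\bar j},\dots$ survive at $z=0$ given $X(0)=0$ (so any surviving term must carry no undifferentiated factor of $X$ unless compensated). A convenient simplification is that, because of the $Aut(T_p)$-invariance~\mref{TransForm} and Proposition~\ref{TransfOrb}, one only ever needs the origin, and one may even pre-normalize using the Einstein equation $\mathrm{Det}[g_{i\bar j}]=e^{3g}$ and its derivatives (differentiate $\mathrm{Log}\,\mathrm{Det}[g_{i\bar j}]=3g$ once and twice at $0$) to get linear relations among the $g_{i\bar jk\bar l}(0)$ that reduce the number of independent quantities to compute. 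Either way, no conceptual difficulty arises beyond careful algebra, and the result follows by direct substitution into the curvature formula~\mref{CurvCoefEq}.
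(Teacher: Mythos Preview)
Your approach is essentially the paper's: specialize \mref{FormeExplicite} at the origin, compute the needed third and fourth derivatives of $g$ from $g=F\circ X+\frac{K}{p}\mathrm{Log}\bigl(\frac{1}{1-4p\pi_1^\mathbb{R}}\bigr)$, and plug into \mref{CurvCoefEq}. Two points where your write-up is slightly off the mark compared to the paper:

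\emph{Parity, not cancellation.} You anticipate having to ``track carefully'' a cancellation of $f^{(2)}(0)$. In fact $F$ is even (Proposition~\ref{Basicsf}), so $f=F^{(1)}$ is odd and hence $f(0)=f^{(2)}(0)=0$ outright. The paper exploits this parity systematically: it lists immediately which $g_{ij}$, $g_{ijk}$, $g_{ijkl}$ vanish at the origin, which prunes the sum in \mref{CurvCoefEq} to a single surviving $(\alpha,\beta)$ term for each coefficient before any arithmetic. This is cleaner than hoping for cancellations in a full Leibniz expansion.

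\emph{The non-trivial elimination is $f^{(3)}(0)$, not $f^{(2)}(0)$.} After the parity reductions one finds $g_{2\bar 2 2\bar 2}(0)=\frac{f^{(3)}(0)}{16}$, so $R_{2\bar 2 2\bar 2}$ initially depends on $f^{(3)}(0)$. The paper removes it by differentiating \mref{EqZ} twice at $0$ to get $Z^{(2)}(0)=3f^{(1)}(0)Z(0)$, which unwinds to $4pK f^{(3)}(0)+4(p-1)f^{(1)}(0)^2=12pK f^{(1)}(0)^2$ and yields the stated $R_{2\bar 2 2\bar 2}=\bigl(-3+\tfrac{1}{K}\bigr)\frac{f^{(1)}(0)^2}{16}$. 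Your mention of differentiating the Einstein equation is exactly this step, but it is the crux for $R_{2\bar 2 2\bar 2}$ rather than an optional shortcut.
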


\begin{proof}[Proof of Proposition \ref{CoefOri}]Recall that $F$ is an even function, hence $f=F^{(1)}=g_{2}(0,\cdot)$ is an odd function. From this we directly deduce that, at $z=0$: 
 \begin{align*}
 &g_{12}=g_{21}=0,
 \\ &g_{1 1 2}=g_{1 2 1}=g_{2 1 1}=0,
 \\ &g_{2 2 2}=0,
 \\ &g_{1112}=g_{1121}=g_{1211}=g_{2111}=0,
 \\ &g_{1222}=g_{2122}=g_{2212}=g_{2221}=0.
 \end{align*}
Hence the coefficients in Equation \mref{CurvCoefEq} simplify into: 
\begin{align*}
&R_{1 \bar{1} 1 \bar{1}}=-g_{1 \bar{1} 1 \bar{1}} + g_{11 \bar{1}}g^{\bar{1}1}g_{1 \bar{1}\bar{1}},
\\&R_{1 \bar{2} 1 \bar{2}}=-g_{1 \bar{2} 1 \bar{2}}+g_{1 1 \bar{1}}g^{\bar{1}1} g_{1 \bar{2}\bar{2}},
\\&R_{1 \bar{1} 2 \bar{2}}=-g_{1 \bar{1} 2 \bar{2}}+g_{1 2 \bar{2}}g^{\bar{2}2}g_{2 \bar{1}\bar{2}},
\\ &R_{2 \bar{2} 2 \bar{2}}=-g_{2 \bar{2}2 \bar{2}}+g_{2 2\bar{1}}g^{\bar{1}1}g_{1 \bar{2} \bar{2}},
\\&R_{1 \bar{1} 1 \bar{2}}=R_{1 \bar{2} 2 \bar{2}}=0.
\end{align*}
We use formula \mref{FormeExplicite} to compute the derivatives of $g$ at the origin. We have, at $z=0$: 
\begin{align*}
\\&\left[g_{i\bar{j}}\right]=
\left[ \begin{matrix}
\displaystyle &4pK &0
\\
\\&0 &\frac{f^{(1)}(0)}{4}
\end{matrix}
\right],\left[ g^{\bar{\alpha}\beta}\right]=
\left[ \begin{matrix}
\displaystyle &\frac{1}{4pK} &0
\\
\\&0 &\frac{4}{f^{(1)}(0)}
\end{matrix}
\right],
\end{align*}
\[g_{111}=16p^2K, \;
g_{122}=\frac{f^{(1)}(0)}{2}, \;
g_{1111}=96p^3K, \;
g_{1122}=(p+1)f^{(1)}(0), \;
g_{2222}=\frac{f^{(3)}(0)}{16}.\]
Thus we obtain: 
\begin{align*}
&R_{1 \bar{1}1 \bar{1}}=-32p^3K,
\\&R_{1 \bar{2}1\bar{2}}=(p-1)f^{(1)}(0),
\\&R_{1 \bar{1} 2 \bar{2}}=-pf^{(1)}(0),
\\&R_{2 \bar{2}2 \bar{2}}=-\frac{f^{(3)}(0)}{16}+\frac{f^{(1)}(0)^2}{16pK}.
\end{align*}
According to Equation \mref{EqZ}, we have $Z^{(2)}(0)=3f^{(1)}(0)Z(0),$ that is $4pKf^{(3)}(0)+4(p-1)f^{(1)}(0)^2=12pKf^{(1)}(0)^2$, hence $R_{2 \bar{2}2 \bar{2}}=\left(-3+\frac{1}{K}\right)\frac{f^{(1)}(0)^2}{16}$.
\end{proof}

From the computations of Proposition \ref{CoefOri} we deduce the precise upper and lower bounds for the holomorphic bisectional curvatures and holomorphic sectional curvatures at the origin: 
\begin{prop}\label{PincNorm}
Let $v,w \in \mathbb{C}^2\setminus\{0\}$. Then we have:
$$ -3+\frac{3}{2p+1}\leq Bis_{0}(v,w)\leq -\frac{3}{2p+1} \quad \text{and} \quad S_{0}(v)\leq -\frac{3}{2}-\frac{1}{2pK}.$$
Moreover,
$$ Bis_{0}((1,0)(1,0))=-3+\frac{3}{2p+1}, \quad Bis_{0}((1,0)(0,1))= -\frac{3}{2p+1},$$
$$\text{and } S_{0}\left(\left(\frac{1}{\sqrt{4pK}},\frac{\sqrt{f^{(1)}(0)}}{2}\right)\right)=-\frac{3}{2}-\frac{1}{2pK}.$$
\end{prop}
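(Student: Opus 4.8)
The plan is to plug the curvature coefficients computed in Proposition \ref{CoefOri} into the tube-domain curvature formula \mref{CurvFormTubEq}, normalize the tangent vectors so that the Kähler form at the origin is the standard Hermitian form, and then solve the resulting optimization problem in a small number of scalar variables. First I would use \mref{InvResc} to assume $|v|=|w|=1$, and then rescale coordinates by the diagonal change of basis $e_1 \mapsto \frac{1}{\sqrt{4pK}}e_1$, $e_2\mapsto \frac{2}{\sqrt{f^{(1)}(0)}}e_2$, which by Proposition \ref{CoefOri} turns $[g_{i\bar j}(0)]$ into the identity. After this normalization, writing $|v_1|^2 = a$, $|v_2|^2 = 1-a$, $|w_1|^2 = b$, $|w_2|^2 = 1-b$ with $a,b\in[0,1]$, and letting $\alpha,\beta$ be the arguments appearing in Proposition \ref{CurvFormTub}, formula \mref{CurvFormTubEq} expresses $Bis_0(v,w)$ as an explicit trigonometric polynomial in $a,b,\alpha,\beta$. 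The coefficients $R_{1\bar1 1\bar2}=R_{1\bar2 2\bar2}=0$ kill the two $\cos\beta,\cos\alpha$ cross terms, so only the $\cos(\alpha-\beta)$ and $\cos(\alpha+\beta)$ terms survive; I would record the normalized values $R_{1\bar11\bar1}/(4pK)^2$, $R_{1\bar12\bar2}/(4pK\cdot f^{(1)}(0)/4)$, $R_{1\bar21\bar2}/(4pK\cdot f^{(1)}(0)/4)$ and $R_{2\bar22\bar2}/(f^{(1)}(0)/4)^2$ in terms of $p$ and $K=\tfrac{2p+1}{3}$, using $f^{(1)}(0)>0$ (which follows from strict convexity of $F$, Proposition \ref{Basicsf}).

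The bisectional bound then reduces to: maximize/minimize over $a,b\in[0,1]$ and $\alpha,\beta\in\mathbb{R}$ a function of the form $A\,ab + B\,[a(1-b)+b(1-a)+2\sqrt{ab(1-a)(1-b)}\cos(\alpha-\beta)] + C\,\sqrt{ab(1-a)(1-b)}\cos(\alpha+\beta) + D\,(1-a)(1-b)$. Since $\cos(\alpha+\beta)$ appears only in one term whose sign can be chosen freely once $ab(1-a)(1-b)\neq 0$, one first optimizes over $\alpha+\beta$ for fixed $\alpha-\beta$, then over $\alpha-\beta$, then over $a,b$; the extrema are attained either at the corners $a,b\in\{0,1\}$ (giving the pure directions $(1,0),(0,1)$, which yield the claimed endpoint values $-3+\tfrac{3}{2p+1}$ and $-\tfrac{3}{2p+1}$ by direct substitution) or at an interior critical point which, after the routine calculus, does not beat the corners. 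The sectional bound $S_0(v)\le -\tfrac32 - \tfrac{1}{2pK}$ is the special case $w=v$, i.e.\ $b=a$, $\beta=\alpha$: here $\cos(\alpha-\beta)=1$ and $\cos(\alpha+\beta)=\cos(2\alpha)$, and one maximizes $A a^2 + 2B a(1-a) + B\cdot\text{(extra)} + C a(1-a)\cos 2\alpha + D(1-a)^2$ over $a\in[0,1]$, $\alpha\in\mathbb{R}$; the maximum over $\alpha$ is obtained by making the $C$-term as negative as possible (note $R_{1\bar21\bar2}=(p-1)f^{(1)}(0)\ge 0$, so one takes $\cos2\alpha=-1$), and the resulting quadratic in $a$ is maximized at the value $a$ corresponding to the vector $\left(\tfrac{1}{\sqrt{4pK}},\tfrac{\sqrt{f^{(1)}(0)}}{2}\right)$, i.e.\ $a=\tfrac12$ in normalized coordinates, giving exactly $-\tfrac32 - \tfrac{1}{2pK}$.

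The main obstacle I anticipate is not conceptual but bookkeeping: one must verify that the interior critical points of the four-variable optimization never exceed the corner values, which requires carefully tracking the signs of $A,B,C,D$ as functions of $p$ (all for $p\ge 1$, using $K=\tfrac{2p+1}{3}\ge 1$). A clean way to organize this is to first fix $a,b$ and reduce to optimizing a function of two phases of the shape $B'\cos(\alpha-\beta) + C'\cos(\alpha+\beta)$, whose range over $\alpha,\beta$ is easily seen (by the sum-to-product identity, it is $2\cos\alpha\cdot$(something), or more simply $|B'|+|C'|$ attained and $-|B'|-|C'|$ attained when the Gram constraint permits); substituting the extreme phase values then leaves a genuinely two-variable polynomial optimization in $a,b$ over the unit square, which is handled by checking the boundary edges and the finitely many stationary points. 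Throughout, the identity $4pKf^{(3)}(0) = (12pK - 4(p-1))f^{(1)}(0)^2$ from the proof of Proposition \ref{CoefOri} is what makes $R_{2\bar22\bar2}$ depend only on $p$ (times $f^{(1)}(0)^2$), so that $f^{(1)}(0)$ cancels everywhere after normalization and all bounds are universal constants depending only on $p$.
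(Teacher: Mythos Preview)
Your approach is essentially the paper's: insert the coefficients from Proposition~\ref{CoefOri} into \mref{CurvFormTubEq}, normalize so that $[g_{i\bar j}(0)]$ becomes the identity, optimize first over the phases $\alpha,\beta$ and then over the moduli. The paper organizes the bookkeeping more cleanly by noticing that, after normalization, $R_{1\bar11\bar1}/g_{1\bar1}^2=R_{2\bar22\bar2}/g_{2\bar2}^2=-\tfrac{6p}{2p+1}=:C$ exactly; factoring $C$ out reduces the bisectional bound to showing that a single quadratic form in $|v_1||w_2|,\,|v_2||w_1|$ (with a phase-dependent cross term) lies in $[0,|v|_g^2|w|_g^2]$, which is immediate from AM--GM and $|\cos|\le 1$ --- so no interior-critical-point hunt is needed. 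One small correction in your sectional paragraph: since $R_{1\bar21\bar2}=(p-1)f^{(1)}(0)\ge 0$, to \emph{maximize} $S_0(v)$ you must take $\cos 2\alpha=+1$, not $-1$; this is consistent with the claimed extremal vector $\bigl(\tfrac{1}{\sqrt{4pK}},\tfrac{\sqrt{f^{(1)}(0)}}{2}\bigr)$, which has real positive entries and hence $\alpha=0$.
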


\begin{proof}[Proof of Proposition \ref{PincNorm}]Let $C:=-3+\frac{3}{2p+1}$. Using Proposition \ref{CoefOri} we have:
\begin{align*}
&\frac{R_{1\bar{1}1\bar{1}}}{g_{1\bar{1}}^2}=-\frac{2p}{K}=C,
\\ &\frac{R_{2\bar{2}2\bar{2}}}{g_{2\bar{2}}^2}=-\frac{2p}{K}=C,
\\ &\frac{R_{1\bar{1}2\bar{2}}}{g_{1\bar{1}}g_{2\bar{2}}}=-\frac{1}{K}=\frac{C}{2p},
\\ &\frac{R_{1\bar{2}1\bar{2}}}{g_{1\bar{1}}g_{2\bar{2}}}=\frac{p-1}{pK}=-\frac{(p-1)C}{2p^2},
\end{align*}
hence:
\begin{align*}
\frac{1}{C}\left(\sum_{1\leq i,j\leq 2}g_{i\bar{j}}v_i\overline{v_j}\right)\left(\sum_{1\leq i,j\leq 2}g_{i\bar{j}}w_i\overline{w_j}\right)Bis(v,w)&=\underbrace{g_{1\bar{1}}^2|v_1|^2|w_1|^2+g_{1\bar{1}}g_{2\bar{2}}(|v_1|^2|w_2|^2+|v_2|^2|w_1|^2)+g_{2\bar{2}}^2|v_2|^2|w_2|^2}_{=\left(\sum_{1\leq i,j\leq 2}g_{i\bar{j}}v_i\overline{v_j}\right)\left(\sum_{1\leq i,j\leq 2}g_{i\bar{j}}w_i\overline{w_j}\right)}
\\&-\frac{2p-1}{2p}g_{1\bar{1}}g_{2\bar{2}}\displaystyle \bigg(|v_1|^2|w_2|^2+|v_2|^2|w_1|^2 
\\&\left.-\frac{2}{2p-1}|v_1||v_2||w_1||w_2|cos(\alpha-\beta)\right.
\\&+\left.\frac{2(p-1)}{p(2p-1)}|v_1||v_2||w_1||w_2|cos(\alpha+\beta)\right),
\end{align*}
\begin{align*}
-\frac{2p}{2p-1}\left(\frac{1}{C}Bis(v,w)-1\right)\left(\sum_{1\leq i,j\leq 2}g_{i\bar{j}}v_i\overline{v_j}\right)\left(\sum_{1\leq i,j\leq 2}g_{i\bar{j}}w_i\overline{w_j}\right)&=g_{1\bar{1}}g_{2\bar{2}}\bigg(|v_1|^2|w_2|^2+|v_2|^2|w_1|^2 
\\&-\frac{2}{2p-1}|v_1||v_2||w_1||w_2|cos(\alpha-\beta)
\\&\left.+\frac{2(p-1)}{p(2p-1)}|v_1||v_2||w_1||w _2|cos(\alpha+\beta)\right).
\end{align*}

Observe that the lower bound of the right-hand side is achieved if $cos(\alpha-\beta)=-cos(\alpha+\beta)=1$, and the upper bound is achieved if $cos(\alpha-\beta)=-cos(\alpha+\beta)=-1$, so that we only need to investigate two cases to obtain the bounds for $Bis(v,w)$. Note that in the case of the holomorphic sectional curvatures, we have $\alpha=\beta$, so that the condition $cos(\alpha-\beta)=-cos(\alpha+\beta)=-1$ cannot occur, hence we have to study this case differently.

\begin{itemize}[leftmargin=*]
\item Case 1: $cos(\alpha-\beta)=-cos(\alpha+\beta)=1$. Computations yield to:
\begin{align*}
&g_{1\bar{1}}g_{2\bar{2}}\left(|v_1|^2|w_2|^2+|v_2|^2|w_1|^2
-\frac{2}{2p-1}|v_1||v_2||w_1||w_2|
-\frac{2(p-1)}{p(2p-1)}|v_1||v_2||w_1||w_2|\right)
\\&=g_{1\bar{1}}g_{2\bar{2}}\left(|v_1|^2|w_2|^2+|v_2|^2|w_1|^2-\frac{2}{p}|v_1||v_2||w_1||w_2|\right)
\\&\geq g_{1\bar{1}}g_{2\bar{2}}\left(|v_1|^2|w_2|^2+|v_2|^2|w_1|^2-2|v_1||v_2||w_1||w_2|\right)
\\&= g_{1\bar{1}}g_{2\bar{2}}(|v_1||w_2|-|v_2||w_1|)^2\geq 0,
\end{align*}
and equality holds if $|v_1||w_2|=|v_2||w_1|$, for instance if $v_1=w_1=0$. Hence $\displaystyle C=-3+\frac{3}{2p+1}\leq Bis_{0}(v,w)$ for every vectors $v,w\in \mathbb{C}^2\setminus\{0\}$.
\item Case 2: $cos(\alpha-\beta)=-cos(\alpha+\beta)=-1$. Computations yield to:
\begin{align*}
&g_{1\bar{1}}g_{2\bar{2}}\left(|v_1|^2|w_2|^2+|v_2|^2|w_1|^2+\frac{2}{p}|v_1||v_2||w_1||w_2|\right)
\\&=|v|_g^2|w|_g^2-g_{1\bar{1}}^2|v_1|^2|w_1|^2-g_{2\bar{2}}^2|v_2|^2|w_2|^2+\frac{2}{p}g_{1\bar{1}}g_{2\bar{2}}|v_1||v_2||w_1||w_2|
\\&\leq|v|_g^2|w|_g^2-g_{1\bar{1}}^2|v_1|^2|w_1|^2-g_{2\bar{2}}^2|v_2|^2|w_2|^2+2g_{1\bar{1}}g_{2\bar{2}}|v_1||v_2||w_1||w_2|
\\&=|v|_g^2|w|_g^2-(g_{11}|v_1||w_1|-g_{22}|v_2||w_2|)^2
\\&\leq |v|_g^2|w|_g^2,
\end{align*}
and equality holds for instance if $u_1=v_2=0$. Hence $\displaystyle Bis_{0}(v,w)\leq -\frac{3}{2p+1}$ for every vectors $v,w\in \mathbb{C}^2\setminus\{0\}$.
\item Case 3: $v=w$. We have:
\begin{align*}
-\frac{2p}{2p-1}\left(\frac{1}{C}S(v)-1\right)\left(\sum_{1\leq i,j\leq 2}g_{i\bar{j}}v_i\overline{v_j}\right)^2&=2g_{1\bar{1}}g_{2\bar{2}}|v_1|^2|v_2|^2\left(1-\frac{1}{2p-1}+\frac{(p-1)}{p(2p-1)}cos(2\alpha)\right)
\\ &\leq \left(1-\frac{1}{p(2p-1)}\right)\frac{\left(\sum_{1\leq i,j\leq 2}g_{i\bar{j}}v_i\overline{v_j}\right)^2}{2},
\\\frac{K}{2p}S(v)&\leq -1+\frac{2p-1}{4p}\left(1-\frac{1}{p(2p-1)}\right)=-\frac{3K}{4p}-\frac{1}{4p^2}
\\S(v)&\leq -\frac{3}{2}-\frac{1}{2pK},
\end{align*}
and equality holds if $g_{1\bar{1}}\left\lvert v_1\right \rvert^2=g_{2\bar{2}}\left\lvert v_2\right \rvert^2$. Hence $\displaystyle S_{0}(v)\leq -\frac{3}{2}-\frac{1}{2pK}$ for every vector $v\in \mathbb{C}^2\setminus\{0\}$. \qedhere
\end{itemize}
\end{proof}

\begin{rem}
It can be deduced from the computations done by J.Bland in \cite{Bla1} that we obtain the same upper and lower bounds for the curvatures of the Kähler-Einstein metric in the Thullen domains $\{\left\lvert z_1\right\rvert^2+\left\lvert z_2\right\rvert^{2p}<1\}$, although Thullen domains and tube domains are not biholomorphic except for $p=1$.
\end{rem}

We can deduce from Proposition \ref{PincNorm} part of Theorem \ref{Main}:
\begin{thm}
\label{BisNorm}
There exist positive constants $0<c\leq C$ and $\alpha>0$ such that 
\[\forall v,w\in \mathbb{C}^2\setminus\{0\},\quad \forall z\in\{\left\lvert X \right \rvert \leq \alpha\},\quad -C\leq Bis_z\left(v,w\right) \leq -c.\]
\end{thm}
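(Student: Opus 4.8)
The plan is to combine the pointwise curvature bounds at the origin established in Proposition \ref{PincNorm} with the $Aut(T_p)$-invariance of the bisectional curvatures (formula \mref{TransForm}) and a continuity/compactness argument. First, by Proposition \ref{TransfOrb}, every point $z\in T_p$ is carried to $(0,X(z))$ by an automorphism $\psi^{(z)}$, and \mref{TransForm} gives $Bis_z(v,w)=Bis_{(0,X(z))}(\partial\psi^{(z)}_z(v),\partial\psi^{(z)}_z(w))$; by the rescaling invariance \mref{InvResc} we may assume the transported vectors lie on the unit sphere $S(0,1)$. Hence it suffices to bound $Bis_{(0,x)}(v,w)$ for $x$ in a neighborhood $[-\alpha,\alpha]$ of $0$ and $(v,w)\in S(0,1)\times S(0,1)$. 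Using formula \mref{CurvFormTubEq}, the quantity $\big(\sum g_{i\bar j}v_i\bar v_j\big)\big(\sum g_{i\bar j}w_i\bar w_j\big)Bis_{(0,x)}(v,w)$ is a polynomial in the curvature coefficients $R_{i\bar jk\bar l}(0,x)$, the metric coefficients $g_{i\bar j}(0,x)$, and the quantities $|v_i|,|w_j|,\cos(\cdot)$; all of these are continuous functions of $x$ near $0$, because by Proposition \ref{Thm1} the metric \mref{FormeExplicite} restricted to $\{0\}\times]-1,1[$ is built from $f=F^{(1)}$ and $f^{(1)}$, which are real analytic on $]-1,1[$ by Proposition \ref{Basicsf}, and the curvature coefficients \mref{CurvCoefEq} involve one further pair of derivatives, still analytic near $0$.

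The key point is that at $x=0$, Proposition \ref{PincNorm} gives the two-sided bound $-3+\tfrac{3}{2p+1}\le Bis_0(v,w)\le-\tfrac{3}{2p+1}$, so in particular $Bis_0(v,w)$ is bounded away from $0$ and from $-\infty$ uniformly over $(v,w)\in S(0,1)^2$. I would set $C_0:=3-\tfrac{3}{2p+1}$ and $c_0:=\tfrac{3}{2p+1}$, then argue as follows: the function $(x,v,w)\mapsto Bis_{(0,x)}(v,w)$ is continuous on $[-\alpha_0,\alpha_0]\times S(0,1)\times S(0,1)$ for some small $\alpha_0>0$ (the metric stays positive definite near $0$ since $g$ is strictly plurisubharmonic, so the denominator in \mref{BisEq} never vanishes), and the compact set $\{0\}\times S(0,1)\times S(0,1)$ on which it is pinched strictly between $-C_0-1$ and $-c_0/2$ has a compact neighborhood in $[-\alpha_0,\alpha_0]\times S(0,1)^2$ on which, by continuity, $-C\le Bis_{(0,x)}(v,w)\le-c$ holds with, say, $C:=C_0+1$ and $c:=c_0/2$. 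Shrinking $\alpha_0$ to such an $\alpha>0$ then yields the bound for all $z\in\{|X|\le\alpha\}$ after transporting back via \mref{TransForm} and \mref{InvResc}.

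I expect the only real subtlety to be making the continuity statement clean: one must check that the transported vectors $\partial\psi^{(z)}_z(v)$, once renormalized to the sphere, together with the point $(0,X(z))$, genuinely range over (a subset of) the compact set to which the compactness argument is applied — this is immediate since $X(z)\in[-\alpha,\alpha]$ whenever $z\in\{|X|\le\alpha\}$ and $\partial\psi^{(z)}_z$ is a linear isomorphism (its determinant is recorded in Proposition \ref{TransfOrb}), so renormalization is well defined. The other routine-but-necessary verification is that $\det[g_{i\bar j}(0,x)]>0$ for $x$ near $0$, which follows from strict plurisubharmonicity of $g$ (equivalently from Equation \mref{EqZ}, since $Z(x)=e^{3F(x)}>0$), guaranteeing the denominator of \mref{CurvFormTubEq} stays bounded below; no asymptotics at $x=1^-$ are needed here, only the behavior near $x=0$. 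Thus the proof reduces to Proposition \ref{PincNorm} plus a soft compactness argument, and there is no serious obstacle.
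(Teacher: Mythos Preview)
Your proposal is correct and follows essentially the same route as the paper's proof: reduce via Proposition \ref{TransfOrb}, relations \mref{TransForm} and \mref{InvResc} to the segment $\{0\}\times]-1,1[$, invoke continuity of $(x,v,w)\mapsto Bis_{(0,x)}(v,w)$ on the compact $[-\alpha,\alpha]\times S(0,1)^2$, and use Proposition \ref{PincNorm} to anchor the bounds at $x=0$. The paper phrases the continuity step as a uniform-continuity estimate $\lvert Bis_{(0,x)}(v,w)-Bis_0(v,w)\rvert\le\epsilon$ with $\epsilon<\tfrac{3}{2p+1}$, whereas you reach the same conclusion by a direct compactness argument with explicit constants $C_0+1$, $c_0/2$; the two are equivalent.
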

\begin{proof}
Since the map
\[\begin{array}{ccll}
&\left]-1,1\right[\times S(0,1)^2 &\longrightarrow &\mathbb{R}
\\& \left(x,v,w\right) &\longmapsto &Bis_{(0,x)}(v,w)
\end{array}\]
is continuous, it is uniformly continuous on every subset of the form $J\times S(0,1)^2$ where $J\subset ]-1,1[$ is a compact set. Especially, we deduce that for every positive number $\epsilon>0$ there exists a positive constant $\alpha >0$ such we have the following:
\begin{equation}
\label{ContNorm}
\forall (x,v,w)\in [-\alpha, \alpha]\times S(0,1)^2,\quad \left \lvert Bis_{(0,x)}(v,w)-Bis_0(v,w) \right \rvert \leq \epsilon.
\end{equation}
Let $\epsilon \in \left] 0, \frac{3}{2p+1}\right[$. Let $\alpha >0$ be such that \mref{ContNorm} holds. We use relations \ref{TransForm} and \ref{InvResc} to obtain that for every point $z\in \{\lvert X \rvert \leq \alpha \}$ and every vectors $v,w\in S(0,1)^2$:
\begin{align*}
&\left \lvert Bis_z(v,w) -Bis_0(\partial \psi^{(z)}_z(v),\partial\psi^{(z)}_z(w))\right \rvert = \left \lvert Bis_{(0,X(z))}(\partial \psi^{(z)}_z(v),\partial\psi^{(z)}_z(w)) -Bis_0(\partial \psi^{(z)}_z(v),\partial\psi^{(z)}_z(w))\right \rvert ,
\\ &= \left \lvert Bis_{(0,X(z))}\left(\frac{\partial \psi^{(z)}_z(v)}{\left \lvert \partial \psi^{(z)}_z(v) \right \rvert},\frac{\partial \psi^{(z)}_z(w)}{\left \lvert \partial \psi^{(z)}_z(w) \right \rvert}\right) -Bis_0\left(\frac{\partial \psi^{(z)}_z(v)}{\left \lvert \partial \psi^{(z)}_z(v) \right \rvert},\frac{\partial \psi^{(z)}_z(w)}{\left \lvert \partial \psi^{(z)}_z(w) \right \rvert}\right)\right \rvert ,
\\&\leq \epsilon,
\end{align*}
therefore, using Proposition \ref{PincNorm} we deduce:
\begin{align*}
-3+\frac{3}{2p+1}-\epsilon \leq Bis_0(\partial \psi^{(z)}_z(v),\partial\psi^{(z)}_z(w)) \leq Bis_{z}\left(v,w\right) \leq Bis_0(\partial \psi^{(z)}_z(v),\partial\psi^{(z)}_z(w)) \leq -\frac{3}{2p+1}+\epsilon<0,
\end{align*}
hence the result.
\end{proof}

\subsection{Holomorphic bisectional curvatures when $\left \lvert X\right \rvert \longrightarrow 1 $}
In this subsection, we use the asymptotic behavior of $F$ obtained in Proposition \ref{Asymptf2} up to order 4 to prove the remaining part of Theorem \ref{Main}. It will follow from the computation of $\lim_{x \to 1^-} Bis_{(0,x)}(v,w)$: 

\begin{thm}\label{BisHTG}
There exist positive constants $0<c\leq C$ and $\alpha>0$ such that 
\[\forall v,w\in \mathbb{C}^2\setminus\{0\},\quad \forall z\in\{\left\lvert 1-\left\lvert X \right \rvert \right \rvert \leq \alpha\},\quad -C\leq Bis_z\left(v,w\right) \leq -c.\]
\end{thm}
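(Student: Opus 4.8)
The plan is to mirror the strategy used for Theorem \ref{BisNorm}: reduce everything via the automorphism group to the one-parameter slice $\{0\}\times]-1,1[$, compute the limiting curvature coefficients as $x\to 1^-$, and then invoke a compactness/continuity argument on the sphere. The key new ingredient is that near $|X|=1$ the slice points $(0,x)$ approach a \emph{strictly} pseudoconvex boundary point $(0,1)$, so one expects the holomorphic bisectional curvatures to converge to those of the ball with its Bergman–Einstein metric, namely values in $[-2,-1]$; in particular the limit is pinched between negative constants, which is what must be promoted to a neighborhood. Concretely, I would first use Proposition \ref{Thm1}, i.e.\ the explicit formula \mref{FormeExplicite} for $[g_{i\bar j}]$ on $T_p$, to write all the curvature coefficients $R_{i\bar j k\bar l}$ at a point $(0,x)$ in terms of $f(x)$, $f^{(1)}(x)$ and the higher derivatives $f^{(k)}(x)$ for $k\le 4$ (together with the constants $p$, $K$). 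Then I would substitute the asymptotics from Proposition \ref{Asymptf2}: $f^{(k)}(x)\sim k!\,(1-x)^{-(k+1)}$ as $x\to 1^-$, so that after the natural rescaling each coefficient $R_{i\bar j k\bar l}(0,x)$ divided by the appropriate product of $g_{i\bar j}(0,x)$'s has a finite nonzero limit.

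The second step is to extract $\lim_{x\to1^-}Bis_{(0,x)}(v,w)$ uniformly over $(v,w)\in S(0,1)^2$. Here I would use the tube-specific curvature formula \mref{CurvFormTubEq} from Proposition \ref{CurvFormTub}: dividing numerator and denominator by $g_{1\bar1}g_{2\bar2}$ (or by the relevant powers of $1-x$), the quotient $Bis_{(0,x)}(v,w)$ becomes a rational function of $|v_1|,|v_2|,|w_1|,|w_2|$ and the three cosines $\cos\alpha$, $\cos\beta$, $\cos(\alpha\pm\beta)$, whose coefficients converge as $x\to1^-$. By a Dini-type argument (or simply uniform continuity on the compact set $[1-\alpha_0,1-\alpha_0']\times S(0,1)^2$ for the finite part, plus explicit control of the tail), I would deduce that there is $\alpha>0$ with
\[
\forall x\in(1-\alpha,1),\quad \forall v,w\in S(0,1),\qquad \bigl|Bis_{(0,x)}(v,w)-L(v,w)\bigr|\le \epsilon,
\]
where $L(v,w):=\lim_{x\to1^-}Bis_{(0,x)}(v,w)$ is continuous on $S(0,1)^2$. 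Since $S(0,1)^2$ is compact and $L$ is everywhere negative, $L$ is bounded above by $-2c_0<0$ and below by $-C_0/2$; choosing $\epsilon=c_0$ gives the desired pinching $-C_0\le Bis_{(0,x)}(v,w)\le -c_0$ on $(1-\alpha,1)$, and the even symmetry of $F$ (Proposition \ref{Basicsf}) transfers it to $(-1,-1+\alpha)$ as well, i.e.\ to $\{\,|1-|X||\le\alpha\,\}\cap(\{0\}\times]-1,1[)$.

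The final step is the descent from the slice to all of $T_p$. Given $z\in T_p$ with $|1-|X(z)||\le\alpha$, apply the automorphism $\psi^{(z)}$ of Proposition \ref{TransfOrb}, which sends $z$ to $(0,X(z))$; by the transformation formula \mref{TransForm} and the scaling invariance \mref{InvResc},
\[
Bis_z(v,w)=Bis_{(0,X(z))}\!\left(\frac{\partial\psi^{(z)}_z(v)}{|\partial\psi^{(z)}_z(v)|},\frac{\partial\psi^{(z)}_z(w)}{|\partial\psi^{(z)}_z(w)|}\right)\in[-C_0,-c_0],
\]
since the right-hand side is an evaluation of the slice curvature at unit vectors. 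Taking $c=c_0$, $C=C_0$ finishes the proof. \emph{The main obstacle} I anticipate is purely computational bookkeeping in Step 1: one must differentiate the matrix \mref{FormeExplicite} up to fourth order and assemble the $R_{i\bar j k\bar l}$ via \mref{CurvCoefEq}, keeping track of which terms survive after dividing by the leading powers of $1-x$; the asymptotics of Proposition \ref{Asymptf2} are tailored so that only the top-order parts contribute to the limit, but verifying that the subleading terms genuinely vanish (in particular that no cancellation produces a spurious zero in the limiting denominator $g_{i\bar j}$, which would break the uniform estimate) is the delicate point. A secondary subtlety is confirming that the limiting curvature $L(v,w)$ is \emph{strictly} negative for every pair of unit vectors — this should follow because $(0,1)$ is a smooth strictly pseudoconvex boundary point and by \cite{Gon1} the curvatures there tend to those of the ball, but one may instead check it directly from the explicit limiting coefficients, as was done at the origin in Proposition \ref{PincNorm}.
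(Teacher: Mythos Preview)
Your strategy matches the paper's: reduce to the slice $\{0\}\times]-1,1[$ via $\psi^{(z)}$ and \mref{TransForm}, feed in the asymptotics of Proposition~\ref{Asymptf2} for the curvature coefficients as $x\to1^-$, and conclude by the same continuity/compactness argument as in Theorem~\ref{BisNorm} (together with the evenness of $F$ for the region near $X=-1$). The one substantive difference is how the limiting curvature is handled. Instead of computing a bare limit $L(v,w)$ and then verifying it is strictly negative, the paper compares $Bis_{(0,x)}(v,w)$ directly to the constant-holomorphic-sectional-curvature model
\[
-1-\frac{\bigl|\sum_{i,j} g_{i\bar j}(0,x)\,v_i\overline{w_j}\bigr|^2}{\bigl(\sum_{i,j} g_{i\bar j}(0,x)\,v_i\overline{v_j}\bigr)\bigl(\sum_{i,j} g_{i\bar j}(0,x)\,w_i\overline{w_j}\bigr)},
\]
by showing that both $R_{i\bar j k\bar l}(0,x)/f(x)^4$ and $-(g_{i\bar j}g_{k\bar l}+g_{i\bar l}g_{k\bar j})(0,x)/f(x)^4$ tend to $-2\,X_iX_{\bar j}X_kX_{\bar l}(0,x)$, so their sum (the numerator of the difference) is $o(1)$. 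Since the comparison term lies in $[-2,-1]$ for \emph{every} $v,w$ and every $x$, the pinching is immediate with no separate strict-negativity check.

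This device also speaks to the obstacle you anticipate. The degeneracy you flag is not a ``spurious'' cancellation but a structural feature: after dividing by $f^2$ one has $\sum_{i,j} g_{i\bar j}(0,x)v_i\overline{v_j}/f(x)^2\to |X_1v_1+X_2v_2|^2$, a rank-one form whose kernel is the complex tangential direction at $(0,1)$. Hence the naive quotient of limiting polynomials that you propose is genuinely singular on a subset of $S(0,1)^2$, and the uniform convergence to $L$ cannot be read off from convergence of coefficients alone. The paper's formulation places the same degenerate denominator on both sides of the comparison, so that only the difference of numerators has to be controlled. Your fallback of invoking \cite{Gon1} at the strictly pseudoconvex point $(0,1)$ would also close the gap, and indeed the paper notes that Propositions~\ref{Asymptf0}--\ref{Asymptf2} could alternatively be obtained from that source.
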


\begin{proof}[Proof of Theorem \ref{BisHTG}]
Because of the invariance of $T_p$ under the symmetry $s$ introduced in Proposition \ref{Generateurs}, it is enough to prove that there exist positive constants $0<c\leq C$ and $0<\alpha <1$ such that: 
\[\forall v,w\in \mathbb{C}^2\setminus\{0\},\quad \forall z\in\{ 1-\alpha \leq X <1\},\quad -C\leq Bis_z\left(v,w\right) \leq -c.\]
First we prove the following:
\begin{equation}
 \label{BisHTGEq0}
 \lim_{x \to 1^-}  \sup_{v,w\in S(0,1)^2} \left(Bis_{(0,x)}(v,w)+1+\frac{\left\lvert \displaystyle \sum_{1\leq i,j\leq 2} g_{i\bar{j}}(0,x)v_i\overline{w_j}\right\rvert^2}{\left( \displaystyle \sum_{1\leq i,j\leq 2} g_{i\bar{j}}(0,x)v_i\overline{v_j}\right)\left( \displaystyle \sum_{1\leq i,j\leq 2} g_{i\bar{j}}(0,x)w_i\overline{w_j}\right)}\right)=0.
\end{equation}
The conclusion then follows from the invariance properties of the metric.
\\First we prove that $\frac{R_{i\bar{j}k\bar{l}}(0,x)}{f(x)^4}\underset{x\to 1^-}{\sim}-2\left(X_{i}X_{\bar{j}}X_k X_{\bar{l}}\right)(0,x)$ and $\frac{\left(  g_{i\bar{j}}g_{k\bar{l}}+g_{i\bar{l}}g_{k\bar{j}}\right)(0,x)}{f(x)^4}\underset{x\to 1^-}{\sim}2\left(X_{i}X_{\bar{j}}X_k X_{\bar{l}}\right)(0,x)$.
\\Let $1\leq i,j,k,l,\alpha,\beta \leq 2$. We differentiate relation \mref{FormeExplicite} to obtain:
\begin{align*}
&g_{i\bar{j}}=f^{(1)}X_{i}X_{\bar{j}}+fX_{i\bar{j}}+\frac{K}{p}Log\left(\frac{1}{1-4p\pi_1^\mathbb{R}}\right)_{i\bar{j}},
\\&Zg^{\bar{\alpha}\beta}=(-1)^{\alpha+\beta}\left(f^{(1)}X_{\overline{3-\alpha}}X_{3-\beta}+fX_{\overline{3-\alpha} 3-\beta}+\frac{K}{p}Log\left(\frac{1}{1-4p\pi_1^\mathbb{R}}\right)_{\overline{3-\alpha} 3-\beta}\right),
\\&g_{i\bar{j}k}=f^{(2)}X_{i}X_{\bar{j}}X_k+f^{(1)}\left(X_{i\bar{j}}X_k+X_{ik}X_{\bar{j}}+X_{k\bar{j}}X_i\right)+fX_{i\bar{j}k}+\frac{K}{p}Log\left(\frac{1}{1-4p\pi_1^\mathbb{R}}\right)_{i\bar{j}k},
\\&g_{i\bar{j}k\bar{l}}=f^{(3)}X_i X_{\bar{j}}X_k X_{\bar{l}}
\\&+f^{(2)}\left(X_{i\bar{j}}X_k X_{\bar{l}}+X_{ik}X_{\bar{j}} X_{\bar{l}}+X_{i\bar{l}}X_{\bar{j}}X_k +X_{k\bar{j}}X_i X_{\bar{l}}+X_{k\bar{l}}X_i X_{\bar{j}}+X_{\bar{j}\bar{l}}X_i X_k\right)
\\&+f^{(1)}\left(X_{i\bar{j}k}X_{\bar{l}}+X_{i\bar{j}\bar{l}}X_k+X_{ik\bar{l}}X_{\bar{j}}+X_{\bar{j}k\bar{l}}X_i +X_{i\bar{j}}X_{k\bar{l}}+X_{ik}X_{\bar{j}\bar{l}}+X_{i\bar{l}}X_{k\bar{j}}\right)
\\&+fX_{i\bar{j}k\bar{l}}+\frac{K}{p}Log\left(\frac{1}{1-Re(4pz_1)}\right)_{i\bar{j}k\bar{l}}.
\end{align*}
We directly deduce from the expression of $g_{i\bar{j}}$ and Proposition \ref{Asymptf2} that $g_{i\bar{j}}(0,x)\underset{x\to 1^-}{\sim}f(x)^2\left(X_iX_{\bar{j}}\right)(0,x)$, hence $\left(g_{i\bar{j}}g_{k\bar{l}}+g_{i\bar{l}}g_{k\bar{j}}\right)(0,x)\underset{x\to 1^-}{\sim}2f(x)^4\left(X_iX_{\bar{j}}X_kX_{\bar{l}}\right)(0,x)$. 
\\Likewise we obtain $g_{i\bar{j}k\bar{l}}(0,x)\underset{x\to 1^-}{\sim}6f(x)^4\left(X_i X_{\bar{j}}X_k X_{\bar{l}}\right)(0,x)$.
\\Moreover, in the expression $g_{i k \bar{\alpha}}g^{\bar{\alpha}\beta}g_{\beta \bar{j}\bar{l}}$, the contribution of a term of the form $(-1)^{\alpha}X_{\bar{\alpha}}X_{\overline{3-\alpha}}$ or $(-1)^{\beta} X_{\beta}X_{3-\beta}$ is 0. Using Proposition \ref{Asymptf2} we obtain:
\begin{align*}
 \left(\sum_{1\leq \alpha,\beta \leq 2}g_{i k \bar{\alpha}}g^{\bar{\alpha}\beta}g_{\beta \bar{j}\bar{l}}\right)(0,x)&\underset{x\to 1^-}{\sim}\left(\frac{\left(f^{(2)}\right)^2f}{Z}\right)(x)\left(X_iX_{\bar{j}}X_kX_{\bar{l}}\sum_{1\leq \alpha,\beta \leq 2}(-1)^{\alpha+\beta}X_{\overline{3-\alpha}3-\beta}X_{\bar{\alpha}}X_\beta\right)(0,x)
 \\&=\left(\frac{\left(f^{(2)}\right)^2f}{Z}\right)(x)\left(X_iX_{\bar{j}}X_kX_{\bar{l}}\right)(0,x)\frac{2p-1}{4}
 \\&
\\&\underset{x\to 1^-}{\sim}4f(x)^4\left(X_iX_{\bar{j}}X_kX_{\bar{l}}\right)(0,x).
\end{align*}
Therefore we deduce that $R_{i\bar{j}k\bar{l}}(0,x)\underset{x\to 1^-}{\sim}-2f(x)^4\left(X_iX_{\bar{j}}X_kX_{\bar{l}}\right)(0,x)$.
Hence we have at point $(0,x)$ for $0<x<1$:
\begin{align*}
&\sup_{v,w\in S(0,1)^2} Bis_{(0,x)}(v,w)
+1+\frac{\left\lvert \displaystyle \sum_{1\leq i,j\leq 2} g_{i\bar{j}}(0,x)v_i\overline{w_j}\right\rvert^2}{\left( \displaystyle \sum_{1\leq i,j\leq 2} g_{i\bar{j}}(0,x)v_i\overline{v_j}\right)\left( \displaystyle \sum_{1\leq i,j\leq 2} g_{i\bar{j}}(0,x)w_i\overline{w_j}\right)}
\\&=\sup_{v,w\in S(0,1)^2}\displaystyle \frac{\displaystyle \sum_{1\leq i,j,k,l\leq 2}\left(\displaystyle\frac{R_{i\bar{j}k\bar{l}}(0,x)}{f(x)^4}
+\displaystyle\frac{\left(g_{i\bar{j}}g_{k\bar{l}}+g_{i\bar{l}}g_{k\bar{j}}\right)(0,x)}{f(x)^4}\right)v_i\overline{v_j}w_k\overline{w_l}}{\left(\displaystyle\frac{  \sum_{1\leq i,j\leq 2} g_{i\bar{j}}(0,x)v_i\overline{v_j}}{f(x)^2}\right)\left( \displaystyle\frac{ \sum_{1\leq i,j\leq 2} g_{i\bar{j}}(0,x)w_i\overline{w_j}}{f(x)^2}\right)},
\\ &\underset{x\to 1^-}{\displaystyle \longrightarrow}0.
\end{align*}
This gives formula \mref{BisHTGEq0}. Since for every vectors $v,w\in \mathbb{C}^2\setminus\{0\}$ and every point $z\in T_p$ we have 
\[-2\leq -1-\frac{\left\lvert \displaystyle \sum_{1\leq i,j\leq 2} g_{i\bar{j}}(z)v_i\overline{w_j}\right\rvert^2}{\left( \displaystyle \sum_{1\leq i,j\leq 2} g_{i\bar{j}}(z)v_i\overline{v_j}\right)\left( \displaystyle \sum_{1\leq i,j\leq 2} g_{i\bar{j}}(z)w_i\overline{w_j}\right)}\leq -1,\]
we conclude exactly as in the end of the proof of Theorem \ref{BisNorm}.

\end{proof}

\section{Asymptotic behavior of the Kähler-Einstein metric in some convex domains}
We briefly recall some results about the Kobayashi metric and domains satisying a squeezing property which are needed in our proof of Theorem \ref{Conv}. 
\\We denote by $\Delta$ the plane unit disk.
For a domain $D\subset \mathbb{C}^n$, a point $z \in D$ and a vector $v\in \mathbb{C}^n \setminus\{0\}$ the Kobayashi pseudo-metric at point $z$ and vector $v$ is defined by :
\[K_D \left( z,v \right) :=\inf \left\lbrace \left\lvert \xi \right\rvert,\; \exists f:\Delta\mapsto D \text{ holomorphic satisfying }  f(0)=z \text{ and }\displaystyle \sum_{1\leq i \leq n}f'_i(z)\xi_i=v \right\rbrace.\]
We denote by $\left[g^{K,D}_{i\bar{j}}(z)\right]$ the matrix representative of the semi-definite metric obtained by polarizing the pseudo-quadratic form $v\mapsto K_D(z,v)$ in the canonical basis of $\mathbb{C}^n$. An important fact is that if $D$ is a convex domain that does not contain a complex line, then actually $\left[g^{K,D}_{i\bar{j}}\right]>0$ and the distance induced by the metric $\left[g^{K,D}_{i\bar{j}}\right]$ is complete on $D$.

The Hausdorff distance beteween two compact sets $A,B\subset \mathbb{C}^n$ is defined by: 
\[d_H\left(A,B\right):=max\left\lbrace \sup_{a\in A}\inf_{b\in B}\left\lvert a-b\right\rvert, \sup_{b\in B}\inf_{a\in A}\left\lvert b-a\right\rvert\right\rbrace.\]
The space of compact sets of $\mathbb{C}^n$ is complete for this distance. We say that a sequence of open convex domains $\left(\Omega_\nu\right)_{\nu \in \mathbb{N}}$ converges to an open convex domain $\Omega_\infty$ in the locall Hausdorff topology if there exists a number $R_0\geq 0$ such that for every number $R\geq R_0$ the sequence  $\left(\overline{\Omega_\nu \cap B(0,R)}\right)_{\nu \in \mathbb{N}}$ converges to $\overline{\Omega_\infty \cap B(0,R)}$ for the Hausdorff distance. 
\\We will use the following fact in the proof of Theorem \ref{Conv}: let $\left(\Omega_\nu\right)_{\nu \in \mathbb{N}}$ be a sequence of bounded convex sets with smooth boundary. Assume that $\left(\Omega_\nu\right)_{\nu \in \mathbb{N}}$ converges in the local Hausdorff topology of convex sets to a convex domain $\Omega_\infty$, and let $K\subset \Omega_\infty$ be a compact set. Then there exists an integer $\nu_K\in \mathbb{N}$ such that for every integer $\nu\geq\nu_K$ one has $K\subset \Omega_\nu$, and the sequence $\left(\left[g^{K,\Omega_\nu}_{i\bar{j}}\right]\right)_{\nu \geq \nu_K}$ converges uniformly to $\left[g^{K,\Omega_\infty}_{i\bar{j}}\right]$ on $K$. In that case we say that $\left(\left[g^{K,\Omega_\nu}_{i\bar{j}}\right]\right)_{\nu \in \mathbb{N}}$ converges uniformly on compact sets of $\Omega_\infty$ to $\left[g^{K,\Omega_\infty}_{i\bar{j}}\right]$. See Theorem 4.1 and Lemma 4.4 of \cite{Zim} for more details about these results.
\\

Let $D \subset \mathbb{C}^n$ be a domain and $a\in \left]0,1\right]$. We say that $D$ satisfies the $a$-squeezing property if for every point $z \in \Omega$, there exists a holomorphic injective map  $f\in 
\mathcal{H}\left(D,B\left(0,1\right)\right)$ satisfying $f(z)=0$ and $B\left(0,a\right)\subset f\left(D\right)$. We refer the reader to \cite{DGZ1}, \cite{KZ} and \cite{Yeu} for the study of domains satisfying a squeezing property.

In the following, for a domain $D\subset \mathbb{C}^2$, we use the notation $g^{E,D}$, respectively $Bis^D$ to denote the Kähler-Einstein potential of $D$ solution of Equation \mref{MAC} with boundary condition \mref{BVal}, respectively its holomorphic bisectional curvatures.

We can now prove Theorem \ref{Conv}:
\begin{proof}[Proof of Theorem \ref{Conv}]
We use a rescaling method to change the study of the boundary behavior  of the holomorphic bisectional curvatures into the study of the interior convergence for the sequence of the "rescaled" Kähler-Einstein metrics.

Our hypothesis on the local expression of $\partial D$ at $q$ implies that there exists an affine map $\psi \in Aut\left(\mathbb{C}^2\right)$ and a neighborhood $U$ of $q$ such that $\psi\left(q\right)=0$ and $\psi\left(D\cap U\right)=\left\lbrace Re(z_1)+H\left(z_2\right)+O\left(\left\lvert z_2 \right \rvert ^{2p+1}+\left\lvert z_1 \right \rvert \left\lvert z \right \rvert \right)<0\right\rbrace \cap \psi\left(U\right)$, with  either $\forall z \in \mathbb{C},\;H(z)=\left\lvert z\right\rvert ^{2p}$ or $\forall z \in \mathbb{C},\;H(z)=Re\left(z\right)^{2p}$. 
Since $\psi$ maps $D$ to $\psi\left(D\right)$ biholomorphically, we have the following by the invariance property of the Kähler-Einstein metric:
\[\forall z\in D,\quad \forall v,w\in \mathbb{C}^2\setminus\{0\},\quad Bis_{z}^D\left(v,w\right)=Bis_{\psi\left(z\right)}^{\psi\left(D\right)}\left(\partial \psi _{z}\left(v\right),\partial\psi _{z}\left(w\right)\right).\]
Moreover the sequence $\left(\psi\left(z^{(\nu)}\right)\right)_{\nu \in \mathbb{N}}$ converges non tangentially to $\psi\left(q\right)=0$ because invertible affine maps preserve cones. Thus up to replacing $D$ with $\psi\left(D\right)$ and $U$ with $\psi\left(U\right)$ we may assume that $q=0$ and $D\cap U=\left\lbrace Re(z_1)+H\left(z_2\right)+O\left(\left\lvert z_2 \right \rvert ^{2p+1}+\left\lvert z_1 \right \rvert \left\lvert z \right \rvert \right)<0\right\rbrace \cap U$. 
\\In this setting the condition of non-tangential convergence of $\left(z^{(\nu)}\right)_{\nu \in \mathbb{N}}\in D^\mathbb{N}$ means that $\displaystyle \left(\frac{-Re\left(z^{(\nu)}_1\right)}{\left\lvert z^{\nu}\right\rvert}\right)_{\nu \in \mathbb{N}}$ is bounded from below by a positive constant, thus up to taking a subsequence we may assume that $\left(\frac{z^{(\nu)}}{\left\lvert z^{(\nu)}\right\rvert}\right)_{\nu \in \mathbb{N}}$ converges to a point $z^{(\infty)}$ with $Re\left(z^{(\infty)}_1\right)<0$. Let 
\[\begin{array}{cccc}
\Lambda^{(\nu)}: &\mathbb{C}^2 &\longrightarrow &\mathbb{C}^2
\\ &z &\longmapsto &\displaystyle \left(\frac{z_1-z^{(\nu)}_1}{Re\left(-z^{(\infty)}_1\right)\left\lvert z^{(\nu)}\right\rvert},\frac{z_2-z^{(\nu)}_2}{\left(Re\left(-z^{(\infty)}_1\right)\left\lvert z^{(\nu)}\right\rvert\right)^{\frac{1}{2p}}}\right),
\end{array}\]
and set $\Omega_\nu:=\Lambda^{(\nu)}\left(D\right)$. As the image of the set $D$ by the affine map $\Lambda^{(\nu)}$, the set $\Omega_\nu$ is a bounded convex domain with boundary of class $\mathcal{C}^\infty$. From results in \cite{Ga}, $\left(\Omega_\nu\right)_{\nu \in \mathbb{N}}$ converges to the model $\Omega_\infty:=\left\lbrace Re(z_1)+H\left(z_2\right)<1\right\rbrace$ in the local Hausdorff topology. 
Then according to \cite{Zim} the sequence $\left(\left[g^{K,\Omega_\nu}_{i\bar{j}}\right]\right)_{\nu \in \mathbb{N}}$ converges uniformly to $\left[g^{K,\Omega_\infty}_{i\bar{j}}\right]$ on compact sets of $\Omega_\infty$.

Also, observe that $\Lambda^{(\nu)}\left(z^{(\nu)}\right)=0$, hence:
\begin{equation}
\label{TransfoDilaEq}
\forall z\in D,\quad \forall v,w\in \mathbb{C}^2\setminus\{0\},\quad Bis_{z^{(\nu)}}^D\left( \left(\partial\Lambda^{(\nu)}_{z^{(\nu)}}\right)^{-1}(v),\left(\partial\Lambda^{(\nu)}_{z^{(\nu)}}\right)^{-1}(w)\right)=Bis_{0}^{\Omega_\nu}\left(v,w\right),
\end{equation}
where the Kähler-Einstein potential $g^{E,\Omega_\nu}$ on $\Omega_\nu$ is induced by the Kähler-Einstein potential $g^{E,D}$. Assume momentarily that for every compact $K\subset \Omega_\infty$ the sequence $\left(g^{(\nu)}\right)_{\nu \geq \nu_K}$ converges to the Kähler-Einstein potential $g^{\Omega_\infty}$ of $\Omega_\infty$ in $\mathcal{C}^4\left(K\right)$. Then according to formula \mref{CurvCoefEq} and relation \mref{TransfoDilaEq} we deduce:
\[
\sup_{v,w\in\mathbb{C}^2\setminus\{0\}}\left\lvert Bis_{z^{(\nu)}}^D\left( \left(\partial\Lambda^{(\nu)}_{z^{(\nu)}}\right)^{-1}(v),\left(\partial\Lambda^{(\nu)}_{z^{(\nu)}}\right)^{-1}(w)\right)-Bis^{\Omega_\infty}_0\left(v,w\right)\right\rvert \underset{\nu \to \infty}{\longrightarrow} 0,
\]
so that we obtain Theorem \ref{Conv} using relation \mref{InvResc}, Theorem \ref{Main} and results in \cite{Bla2}. Thus it remains to prove that for every compact $K\subset \Omega_\infty$ the sequence $\left(g^{(\nu)}\right)_{\nu \geq \nu_K}$ converges to the Kähler-Einstein potential $g^{\Omega_\infty}$ of $\Omega_\infty$ in $\mathcal{C}^4\left(K\right)$. In fact we prove that for every integer $k\in \mathbb{N}$, the sequence $\left(g^{(\nu)}\right)_{\nu \geq \nu_K}$ converges to $g^{\Omega_\infty}$ in $\mathcal{C}^4\left(K\right)$. Observe that by uniqueness of the Kähler-Einstein potential $g^{E,\Omega_\infty}$ and by the theorem of Arzelà-Ascoli it is enough to prove that for every integer $k\in \mathbb{N}$ the sequence $\left(g^{(\nu)}\right)_{\nu \geq \nu_K}$ is bounded in $\mathcal{C}^k\left(K\right)$. Thus we are interested in obtaining $\mathcal{C}^k\left(K\right)$ estimates of the family $\left(g^{(\nu)}\right)_{\nu \geq \nu_K}$ of solutions to equation \mref{MAC}.
It relies on obtaining estimates of Sobolev norms of the sequences $\left(Log\left\lvert g^{E,\Omega_\nu}_{i\bar{j}} \right\rvert \right)_{\nu \in \mathbb{N}}$ and $\left(\Delta g^{E,\Omega_\nu} \right)_{\nu \in \mathbb{N}}$ on bounded subdomains of $\Omega_\infty$, where $\Delta$ denotes the Laplacian operator for the Euclidean metric on $\mathbb{C}^2$ (see the proof of Lemma 3 in \cite{Yeu} for more details).
\\Since $\Omega$ is a bounded convex domain with boundary of class $\mathcal{C}^\infty$, it follows from \cite{KZ} that there exists a number $0<a$ such that $D$ satisfies the $a$-squeezing property. Since $\Omega_\nu$ is biholomorphic to $D$, $\Omega_\nu$ also satisfies the $a$-squeezing property. From Proposition 3 in \cite{Yeu} we deduce that there exist constants $0<c\leq C$ such that for every integer $\nu\in\mathbb{N}$ we have $c\left[g^{K,\Omega_\nu}_{i\bar{j}}\right]\leq \left[g^{E,\Omega_\nu}_{\i\bar{j}}\right] \leq C\left[g^{K,\Omega_\nu}_{i\bar{j}}\right]$ on $\Omega_\nu$. Moreover recall that the sequence $\left(\left[g^{K,\Omega_\nu}_{i\bar{j}}\right]\right)_{\nu \in \mathbb{N}}$ converges uniformly on compact sets to $\left[g^{K,\Omega_\infty}_{i\bar{j}}\right]$. Therefore we obtain the uniform estimates by following line by line the proof of Lemma 3 in \cite{Yeu} (by replacing the balls $B_{\frac{a}{2}}(x), B_{a}(x)$ with bounded domains included in $\Omega_\infty$).
\end{proof}

\bibliographystyle{plain}
\bibliography{Biblio}

\end{document}